\newtheorem{thm}{Theorem}[section]
\newtheorem{lem}[thm]{Lemma}
\newtheorem{prop}[thm]{Proposition}
\theoremstyle{definition}
\theoremstyle{definition}
\newtheorem{rmk}[thm]{Remark}
\newcommand*\diff{\mathop{}\!\mathrm{d}}
\newcommand{\Rmnum}[1]{\expandafter\@slowromancap\romannumeral #1@}
\DeclareMathOperator{\ord}{ord}
\newcommand{\Addresses}{{% additional braces for segregating \footnotesize
  \bigskip
  \footnotesize

\textsc{Korteweg-de Vries Instituut, Universiteit van Amsterdam
} \par\nopagebreak
\textsc{Postbus 94248, 1090 GE
Amsterdam, The Netherlands}\par\nopagebreak
  \textit{E-mail address}: \texttt{Z.Zhou@uva.nl}

}}
\begin{document}
\title[Existence of curves with prescribed $a$-number]{On the existence of curves with prescribed $a$-number}
\author{Zijian Zhou}

\begin{abstract}
{We study the existence of Artin-Schreier curves with large $ a$-number. We also give  bounds on the $a$-number of  trigonal curves of genus $5$ in small characteristic.}
\end{abstract}
\maketitle

\thispagestyle{empty}
\vspace{-1cm}
\section{Introduction}
Let $k$ be an algebraically closed field of characteristic $p>0$. By a curve we mean a smooth irreducible projective curve defined over $k$. 
Let $X$ be a  curve defined over $k$ and ${\rm Jac}(X)$ be its Jacobian. Such curve has several invariants, e.g. the $a$-number and the $p$-rank.
The $a$-number of the curve $X$ is defined as $a(X)=\dim_k({\rm Hom}(\alpha_p,{\rm Jac}(X)))$ with $\alpha_p$ the group scheme which is the  kernel of Frobenius on additive group scheme $\mathbb{G}_a$.  The $a$-number of $X$ is equal to $g-r$ where $g$ is the genus of $X$ and  $r$ is the rank of the Cartier-Manin matrix, that is, the matrix for the Cartier operator defined on $H^0(X,\Omega_X^1)$. We refer to \cite{MR0084497,Seshadri1958-1959} for the properties of the Cartier operator. The $p$-rank of a curve $X$ is the number $f_X$ such that $\#{\rm Jac}(X)[p](k)=p^{f_X}$. One see that $a(X)+f_X\leq g$. Moreover, a curve is called supersingular if its Jacobian is isogenous to a product of supersingular elliptic curves.

A curve $X$ of genus $g$ is called superspecial if $a(X)=g$.  Ekedahl~\cite{Ekedahl1987} showed that for a superspecial curve $X$ one has $g\leq p(p-1)/2$. We are interested in the existence of  curves in characteristic $p>0$ with  $a$-number close to $g$, namely $a(X)=g-1$ or $g-2$.  For hyperelliptic curves with $p = 2$, Elkin and Pries  \cite{MR3095219} gave a complete description  of  their $a$-numbers. For an Artin-Schreier curve $X$, that is, a $\mathbb{Z}/p\mathbb{Z}$-Galois cover of $\mathbb{P}^1$, of genus $g$ with $p=2$ and $a(X)=g-1$, we prove that $g\leq 3$ and give an explicit  form of the curve. For $p\geq 3$, we show  that  an Artin-Schreier curve with $a$-number $g-1$ has genus $g\leq p(p-1)/2$ and can be written as $y^p-y=f(x)$ with $f(x)$ a polynomial whose degree divides $p+1$, see Proposition \ref{prop 2.2 a=g-1}. Moreover, we have the following.
\begin{thm}\label{AS THEOREM g-1}
Let $k$ be an algebraically closed field with ${\rm char}(k)=p\geq 3$. Let X be an Artin-Schreier curve of genus $g>0$ with equation $y^p-y=f(x)$. If $a(X)=g-1$, then $f(x)\in k[x]$ and if $d=\deg f(x)$ then 
either $p=5,d=3$ and $X$ is isomorphic to a supersingular curve of genus $4$ with equation
\begin{align}
y^5-y=x^3+a_1x,~a_1 \neq 0\, ,\label{Chapter AS p=5 a=g-1}
\end{align}
 or $p=3,d=4$ and $X$ is isomorphic to a supersingular curve of genus $3$ with equation
\begin{align}
y^3-y=x^4+a_2x^2,a_2 \neq 0\, . \label{Chapter AS p=3 a=g-1}
\end{align}
\end{thm}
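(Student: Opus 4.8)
The plan is to translate the hypothesis into the statement that the Cartier operator $\mathcal{C}$ on $H^0(X,\Omega^1_X)$ has rank exactly $1$, since $a(X)=g-\operatorname{rank}\mathcal{C}$, so that $a(X)=g-1$ is equivalent to $\operatorname{rank}\mathcal{C}=1$. By Proposition~\ref{prop 2.2 a=g-1} I may assume $f\in k[x]$ with $d=\deg f$ dividing $p+1$ (hence $\gcd(d,p)=1$), in which case the cover is ramified only over $\infty$, one has $g=(p-1)(d-1)/2$, and the forms $\omega_{i,j}=x^iy^j\,dx$ with $0\le j\le p-2$ and $0\le i\le d-1-\lceil(d(j+1)+1)/p\rceil$ constitute a basis of $H^0(X,\Omega^1_X)$; the bound is $pi+dj\le 2g-2$, read off from $v_{P_\infty}(\omega_{i,j})\ge 0$. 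Recording these ranges turns the problem into a finite linear-algebra question.

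To compute $\mathcal{C}$ I would use that $x$ is separating: writing any $g\in k(X)$ uniquely as $g=\sum_{s=0}^{p-1}u_s^p x^s$ gives $\mathcal{C}(g\,dx)=u_{p-1}\,dx$. Expanding $f=\sum_{s}A_s(x)^p x^s$ with $A_s(x)=\sum_m c_{pm+s}^{1/p}x^m$, and $y^j=\sum_s w_{j,s}^p x^s$, the relation $y=y^p-f$ yields the recursion
\[
w_{j,u}=y\,w_{j-1,u}-\sum_{t+s\equiv u\,(p)}A_t\,w_{j-1,s}\,x^{\lfloor(t+s)/p\rfloor},\qquad w_{0,u}=\delta_{u,0},
\]
from which $\mathcal{C}(\omega_{i,j})=x^{\lfloor i/p\rfloor}\,w_{j,\,p-1-(i\bmod p)}\,dx$. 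A first payoff is structural: $\deg_y w_{j,u}\le j-1$ for $u\ge1$ while $\deg_y w_{j,0}=j$, and no holomorphic $\omega_{i,j}$ has $i\equiv-1\pmod p$ (else $pi+dj\ge p(p-1)>2g-2$); hence $\mathcal{C}$ strictly lowers the $y$-degree, is nilpotent, and $X$ has $p$-rank $0$ and $a(X)\ge1$.

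The decisive step is to compute the finite Cartier--Manin matrix explicitly from the recursion and impose $\operatorname{rank}=1$, i.e. that every $2\times2$ minor vanishes while the matrix is nonzero. Using that for $d\le p+1$ one has $A_u=c_u^{1/p}$ for $u\ge 2$ and an analogous two-term expression at $u=1$, the surviving entries become explicit $\mathbb{F}_p$-linear combinations of the $c_u^{1/p}$; carrying out the vanishing of all $2\times 2$ minors against the constraint that the leading coefficient $c_d$ is nonzero, over every divisor $d\mid p+1$ and every $p\ge3$, is the main obstacle. I anticipate that this elimination leaves exactly the cases $p=3,d=4$ and $p=5,d=3$: in the first the only nonzero entry is $\mathcal{C}(y\,dx)=-c_2^{1/p}\,dx$, and in the second $\mathcal{C}(y^2\,dx)=(2c_1^{1/p}+c_2^{2/p})\,dx$, all other basis forms lying in $\ker\mathcal{C}$, while every larger $p$ and the remaining divisors force either $\mathcal{C}=0$ (superspecial, $a=g$) or $\operatorname{rank}\mathcal{C}\ge2$.

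Finally I would reduce to the stated normal forms. The automorphisms $x\mapsto\lambda x+\gamma$ together with Artin--Schreier substitutions $y\mapsto y+h(x)$ (which change $f$ by $h^p-h$ and allow removal of terms of degree divisible by $p$) clear the $x^2$-term when $p=5$ and the $x$-term when $p=3$, bringing $f$ to $x^3+a_1x$, respectively $x^4+a_2x^2$; the rank-$1$ computation above then reads $\operatorname{rank}\mathcal{C}=1\iff a_1\ne0$, respectively $a_2\ne0$, the pure curves $y^p-y=x^d$ being superspecial. Supersingularity of the resulting curves is then checked directly---for instance by evaluating the associated exponential sums governing the zeta function, or by relating the curve to the Hermitian curve $y^p-y=x^{p+1}$---which completes the identification.
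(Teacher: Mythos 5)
Your framework is the same as the paper's---reduce via Proposition \ref{prop 2.2 a=g-1} to $f\in k[x]$ with $d\mid p+1$, use the monomial basis $\{x^iy^j\diff x : pi+dj\le (p-1)(d-1)-2\}$, and translate $a(X)=g-1$ into ${\rm rank}(\mathcal{C})=1$; your recursion for computing $\mathcal{C}$ is a legitimate repackaging of the expansion $y^j=(y^p-f(x))^j$, and your structural observations (no holomorphic form has $i\equiv -1 \pmod p$, so $\mathcal{C}$ lowers $y$-degree, is nilpotent, and the $p$-rank is $0$) are correct. But the proof has a genuine gap exactly where you yourself flag ``the main obstacle'': you never prove that imposing rank one eliminates every $(p,d)$ other than $(3,4)$ and $(5,3)$; you only ``anticipate'' that the minor-elimination does so. Since the theorem quantifies over \emph{all} primes $p\ge 3$, this elimination is not a finite computation that can simply be ``carried out''---one needs an argument uniform in $p$ and $d$, and producing it is the actual content of the theorem. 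The paper supplies it as follows: with $i_0$ the largest index such that $a_{i_0}\neq 0$, $d-2=mi_0+b$ ($0\le b\le i_0-1$) and $l=(p+1)/d$, it exhibits in each configuration two explicit differentials (for instance $x^by^m\diff x$ and $x^{b-1}y^{m+1}\diff x$ when $d=p+1$, $b\ge 1$; $y^{m}\diff x$ and $y^{m+1}\diff x$ when $b=0$; $x^by^{l+m-1}\diff x$ and $x^by^{l+m}\diff x$ when $d\le (p+1)/2$, $i_0\ge 2$; $y^{l+m+b-1}\diff x$ and $y^{l+m+b}\diff x$ when $i_0=1$), verifies membership in the basis by elementary inequalities exploiting $d\mid p+1$, and shows their Cartier images are $k$-linearly independent by isolating the coefficient of $x^{p-1}$ (resp.\ $x^{2p-1}$) coming from the term $a_{i_0}^m$; this forces ${\rm rank}(\mathcal{C})\ge 2$ for every $p\ge 7$ and for $p=5$, $d=6$. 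Nothing in your proposal substitutes for this step.

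The supersingularity claim is a second gap. It does not follow from $a(X)=g-1$ (the $a$-number alone does not determine the Newton polygon), and neither of your two suggested routes is carried out. The Hermitian-curve route is in fact doubtful at the outset: with $a_1\neq 0$ (resp.\ $a_2\neq 0$) the polynomial $x^3+a_1x$ (resp.\ $x^4+a_2x^2$) is neither a pure power nor additive, so there is no evident dominant map relating these curves to $y^p-y=x^{p+1}$; and the exponential-sum route requires proving that all Frobenius eigenvalues over every finite field of definition have the supersingular slope, again a real argument, not a remark. The paper handles this by computing the Ekedahl--Oort types ($[4,3,2]$ for $p=5$, $[3,2]$ for $p=3$) from the explicit de Rham basis of Proposition \ref{prop de Rham basis of y^p-y=h(x)}, and then citing \cite[Step 2, page 1379]{10.2307/23030376} to conclude that curves with these types are supersingular. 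Your proof needs an ingredient of comparable strength at this point.
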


We prove these results mainly by explicitly calculating the action of the Cartier operator on a basis of holomorphic differential forms. To show the supersingularity we use the de Rham cohomology. 

 By the Deuring-Shafarevich formula \cite{Subrao1975}, an Artin-Schreier curve $X$  has $p$-rank $(m-1)(p-1)$, where  $m$  is the number of branch points. Let $X$ be an Artin-Schreier curve with $a(X)=g-2$. Then for $p\geq 5$, the curve $X$ can be written as  $y^p-y=f(x)$ with $f(x)$ a polynomial. For $p=3$, we give an explicit form of $X$, see Proposition \ref{prop  2.2 a=g-2}. 
Moreover, we have the following. 

\begin{prop}\label{prop a=g-2}
Let $X$ be an Artin-Schreier curve of genus $g>0$ given by an equation $y^p-y=f(x)$, where $f(x)\in k[x]$ and $\deg f(x)=d$. If $d|p+1$ and $a(X)=g-2$, then $p=7,d=4$ and $X$ is isomorphic to the supersingular curve of genus $9$ with equation
$$
y^7-y=x^4+a_1x,\, a_1\in k^*\, .
$$
\end{prop}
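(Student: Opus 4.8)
The plan is to compute the Cartier--Manin matrix explicitly in a normal form and to show that the rank-two condition $a(X)=g-2$ can hold only for $(p,d)=(7,4)$. First I would fix the set-up. Since $d\mid p+1$ we have $\gcd(d,p)=1$; the proper divisors satisfy $d\le (p+1)/2<p$ (the borderline $d=p+1$, where $X$ is a quotient of the Hermitian curve and superspecial, is disposed of by a variant of the expansion below). The cover $X\to\mathbb P^1$ is then totally ramified only over $\infty$, so $X$ has $p$-rank $0$ and $g=(p-1)(d-1)/2$, and a basis of $H^0(X,\Omega^1_X)$ is
$$\{\,x^iy^j\,dx : i,j\ge 0,\ pi+dj\le pd-p-d-1\,\}.$$
After the standard Artin--Schreier normalization one may take $f(x)=x^d+\sum_{0<r<d,\,p\nmid r}c_rx^r$ with $c_d=1$. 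Because $d<p$ no reduction of $p$-th powers is needed and $y^j=(y^p-f(x))^j$ directly; writing $f(x)^\ell=\sum_s\gamma_{\ell,s}x^s$ and using $\mathcal C(u^p\omega)=u\,\mathcal C(\omega)$ together with $\mathcal C(x^n\,dx)=x^{(n+1-p)/p}\,dx$ when $n\equiv-1\ (\mathrm{mod}\ p)$ and $0$ otherwise, one obtains
$$\mathcal C(x^iy^j\,dx)=\sum_{\ell=0}^{j}(-1)^\ell\binom{j}{\ell}\,y^{\,j-\ell}\!\!\sum_{\substack{s\ge 0\\ i+s\equiv -1\ (p)}}\!\!\gamma_{\ell,s}\,x^{(i+s+1-p)/p}\,dx .$$

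The qualitative point is that $i\le d-2<p-1$ forces the $\ell=0$ term to vanish, so $\mathcal C$ strictly lowers the $y$-degree: the Cartier--Manin matrix is nilpotent, which recovers $f_X=0$, and $a(X)=g-\operatorname{rank}\mathcal C$. For the pure power $f=x^d$ one has $\gamma_{\ell,s}=\delta_{s,d\ell}$; since $y^p-y=x^d$ is a quotient of the superspecial curve $y^p-y=x^{p+1}$ it is superspecial, so $\mathcal C\equiv 0$ (equivalently, $i+d\ell\equiv-1\ (\mathrm{mod}\ p)$ has no admissible solution in the ranges $i\le d-2$, $\ell\le j\le p-1-(p+1)/d$). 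Hence the \emph{entire} rank of $\mathcal C$ is produced by the lower coefficients $c_r$, and the problem becomes: decide for which $(p,d)$ some choice of the $c_r$ gives $\operatorname{rank}\mathcal C=2$.

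I would then analyse this perturbed matrix. Turning on the lowest coefficient $c_1$ creates entries coming from the factor $x^{(d-1)t+\ell}$ in $f^\ell$, with weight $\binom{j}{\ell}\binom{\ell}{t}c_1^{\ell-t}\pmod p$, the largest $y$-degrees producing the first nonzero images. A direct computation, tracking vanishing of these binomial coefficients modulo $p$ via Lucas' theorem, shows that for $(p,d)=(7,4)$ and $f=x^4+a_1x$ exactly the three differentials $x^0y^4\,dx,\ x^0y^3\,dx,\ x^1y^2\,dx$ have nonzero image, the latter two landing on the single basis element $x^0y^0\,dx$; their images span a two-dimensional space, so $\operatorname{rank}\mathcal C=2$ precisely when $a_1\ne 0$. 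Moreover switching on $c_2$ (and $c_3$, which is first removed by $x\mapsto x+\beta$) produces an additional independent image vector and forces $\operatorname{rank}\mathcal C\ge 3$, so these coefficients must vanish and $X\cong\{y^7-y=x^4+a_1x\}$ with $a_1\ne0$. The main obstacle is the complementary claim that \emph{no} other $(p,d)$ with $d\mid p+1$ admits a rank-two specialization. Here I would show that once $p$ or $d$ grows the number of $y$-levels $p-(p+1)/d$ and the admissible pairs $(\ell,t)$ make at least three of the top images linearly independent, reducing to a finite list of small $(p,d)$ (for instance $d=2$ forces $\mathcal C\equiv0$, and $(p,d)=(5,3)$ gives rank $1$ by Theorem~\ref{AS THEOREM g-1}), each then excluded by the explicit entries. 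The delicate part is controlling cancellations among these image vectors, i.e.\ proving that the relevant $2\times2$ and $3\times3$ minors cannot all degenerate except at $(7,4)$.

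Finally, to prove supersingularity of $y^7-y=x^4+a_1x$ I would pass to $H^1_{\mathrm{dR}}(X)$ and compute the matrices of Frobenius and Verschiebung on a basis extending the holomorphic differentials above, showing that the Newton polygon is the straight line of slope $1/2$; equivalently, that the $F$--$V$ module has all slopes equal to $1/2$, which with $\dim=2g$ yields that $\mathrm{Jac}(X)$ is isogenous to a product of supersingular elliptic curves.
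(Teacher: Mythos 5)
Your computation at $(p,d)=(7,4)$ matches the paper's (the three forms $y^4\,dx$, $y^3\,dx$, $xy^2\,dx$ with $\mathcal{C}(y^3\,dx)$ proportional to $\mathcal{C}(xy^2\,dx)$, giving rank exactly $2$ when $a_1\neq 0$), but there are two genuine gaps. The first is the heart of the proposition, which you yourself flag as ``the main obstacle'' and ``the delicate part'': proving that no $(p,d)\neq(7,4)$ with $d\mid p+1$ admits a rank-two specialization. You only sketch a strategy (``at least three of the top images linearly independent,'' reduce to a finite list, control degeneration of $2\times 2$ and $3\times 3$ minors), and controlling cancellations among image vectors in general is exactly what makes this hard. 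The paper avoids any minor analysis by a careful combinatorial choice: with $i_0$ the largest index in $\{1,\dots,d-2\}$ with $a_{i_0}\neq 0$, and $l,m,b$ defined by $ld=p+1$, $d-2=mi_0+b$, $0\le b<i_0$, it exhibits three explicit basis elements (for instance $x^by^{l+m-1}\,dx$, $x^by^{l+m}\,dx$, $x^by^{l+m+1}\,dx$ when $d<p+1$ and $i_0\geq 2$, with variants for $b=0$, for $i_0=1$, and for $d=p+1$) whose Cartier images are automatically independent because their leading terms carry distinct powers of $y$; membership in the basis reduces to elementary inequalities such as $(i_0l-1)m\geq 3$. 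The remaining small cases ($p=5$ with $d=3$ or $6$; $p=3$; $p=7$ with $i_0=1$) are then checked directly, and $(7,4)$ is the unique survivor. Without an argument of this kind your proof does not establish the exclusion, which is most of the content of the statement.

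The second gap is your treatment of $d=p+1$: you discard it on the grounds that such $X$ ``is a quotient of the Hermitian curve and superspecial.'' That is true only for the pure power $y^p-y=x^{p+1}$; once some middle coefficient $a_i\neq 0$ ($2\leq i\leq d-2$) survives the normalization, the curve is not superspecial (e.g.\ $y^5-y=x^6+a_2x^2$), and this case needs its own rank bound. The paper spends a substantial part of the proof showing ${\rm rank}(\mathcal{C})\geq 3$ here, both for $p\geq 7$ and, separately, for $p=5$, $d=6$; your ``variant of the expansion below'' does not exist as written, since for $d=p+1$ your reduction $y^j=(y^p-f(x))^j$ also requires reducing $p$-th powers. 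Finally, your supersingularity argument (Newton polygon via Frobenius and Verschiebung on $H^1_{\mathrm{dR}}$) is only a plan; the paper instead computes the Ekedahl--Oort type from the explicit de Rham basis of Proposition \ref{prop de Rham basis of y^p-y=h(x)} and invokes \cite[Step 2, page 1379]{10.2307/23030376}, which is a shorter route to the same conclusion. Your route is viable in principle, but as presented neither the exclusion nor the supersingularity is actually proved.
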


 Recall that a result of Re \cite{rre} states that if  $X$ is a non-hyperelliptic curve of genus~$g$, then
\begin{align*}
a(X)\leq \frac{p-1}{p+1}(\frac{2g}{p}+g+1)\, .
\end{align*}
The following results improve Re's bound for  trigonal curves of genus $5$ in low characteristics.  Note  that a trigonal curve of genus $5$ is not hyperelliptic, see, for example, \cite[Section 2.1]{2018arXiv180411277K}.
\begin{thm}\label{thm p=2}
Let $k$ be an algebraically closed field of characteristic $2$. If  $X$ is a trigonal curve of genus 5 defined over $k$, then $a(X)\leq 2$. 
%In particular, there is no supersingular trigonal curve of genus $5$ in characteristic 2.
\end{thm}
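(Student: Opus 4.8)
The plan is to translate the bound $a(X)\le 2$ into the statement $\operatorname{rank}\mathcal C\ge 3$, where $\mathcal C$ is the (semilinear) Cartier operator on $H^0(X,\Omega_X^1)$ and $a(X)=5-\operatorname{rank}\mathcal C$. Since $3$ is invertible in characteristic $2$, a primitive element for the degree-$3$ cover $\phi\colon X\to\mathbb P^1$ can be chosen so that $X$ has an affine model $y^3+A(x)y+B(x)=0$ with $A,B\in k[x]$, after clearing the quadratic term by a Tschirnhaus substitution. I would fix the index ranges of a basis of $H^0(X,\Omega^1_X)$ using the Maroni classification of trigonal curves, so that for the general (balanced) curve, where $\phi_*\mathcal O_X=\mathcal O\oplus\mathcal O(-3)\oplus\mathcal O(-4)$, the five differentials are $\omega_i=x^i\,dx/F_y$ for $0\le i\le 2$ and $\nu_j=x^jy\,dx/F_y$ for $0\le j\le 1$, where $F_y=y^2+A$.

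The computational heart is a description of $\mathcal C$ special to characteristic $2$. Two identities drive everything: on $X$ one has $yF_y=y^3+Ay=B$, and, writing the even and odd parts as $A=A_0^2+xA_1^2$ and $B=B_0^2+xB_1^2$ with $A_i,B_i\in k[x]$, one gets
\begin{align*}
\frac1{F_y}=\Big(\frac{y+A_0}{F_y}\Big)^2+x\Big(\frac{A_1}{F_y}\Big)^2,\qquad \frac{y}{F_y}=\frac{B}{F_y^2}=\Big(\frac{B_0}{F_y}\Big)^2+x\Big(\frac{B_1}{F_y}\Big)^2.
\end{align*}
Feeding these into $\mathcal C\big((f_0^2+xf_1^2)\,dx\big)=f_1\,dx$ yields the explicit rules $\mathcal C(\omega_{2s})=x^sA_1\,dx/F_y$, $\mathcal C(\omega_{2s+1})=x^s(y+A_0)\,dx/F_y$, $\mathcal C(\nu_{2s})=x^sB_1\,dx/F_y$ and $\mathcal C(\nu_{2s+1})=x^sB_0\,dx/F_y$. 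In particular every $\nu_j$ maps into the span of the $\omega_i$, and a factor of $y$ (hence a genuine $\nu$-component) is produced only from the odd $\omega_i$.

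Writing the resulting $5\times5$ matrix in the basis $(\omega_0,\omega_1,\omega_2,\nu_0,\nu_1)$, the image of $\omega_1$ supplies the unique row meeting the $\nu_0$-coordinate, so $\operatorname{rank}\mathcal C=1+\operatorname{rank}$ of the images of $\omega_0,\omega_2,\nu_0,\nu_1$, all of which lie in the span of $\omega_0,\omega_1,\omega_2$. I would then observe that this residual rank drops below $2$ only if simultaneously $A_1=0$ (so $A=A_0^2$ is a square) and $B_0,B_1$ are proportional. In that degenerate situation $F_y=(y+A_0)^2$ and $F_x=B_1^2$, while $F|_{y=A_0}=B$; since $B_0$ is proportional to $B_1$, every zero of $B_1$ is a common zero of $B$ and $B_1^2$, so at $y=A_0$ it gives a point with $F=F_x=F_y=0$. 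As $\deg B$ is odd the term $xB_1^2$ carries the top degree, so $B_1$ is non-constant and such a point exists, contradicting the smoothness of $X$. Hence $\operatorname{rank}\mathcal C\ge 3$ and $a(X)\le 2$.

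The main obstacle is the Maroni-special locus, where $\phi_*\mathcal O_X$ has the unbalanced splitting $\mathcal O\oplus\mathcal O(-2)\oplus\mathcal O(-5)$. There the naive basis $\{x^iy^\epsilon\,dx/F_y\}$ degenerates (the holomorphy constraints at infinity force $B\equiv0$), which reflects the fact that in characteristic $2$ the relevant fibre of $\phi$ carries wild ramification. I would treat this case by recentering the cover so that infinity becomes a reduced fibre, re-deriving the holomorphic basis and the analogue of the four Cartier rules, and again reading off that any rank drop forces a singularity. Controlling this wild behaviour and the attendant degree bookkeeping is the delicate part; the characteristic-$2$ identity $yF_y=B$ and the square/non-square dichotomy for $A$ remain the essential tools.
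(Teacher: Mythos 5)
Your reduction of the theorem to $\operatorname{rank}(\mathcal C)\ge 3$ and your characteristic-$2$ identities are fine (indeed $yF_y=B$, the square decompositions of $1/F_y$ and $y/F_y$, and the rule $\mathcal C\bigl((f_0^2+xf_1^2)\diff x\bigr)=f_1\diff x$ are all correct), but the setup they feed into cannot exist, and this is a genuine gap, not bookkeeping. Suppose the five forms $x^i\diff x/F_y$ $(0\le i\le 2)$ and $x^jy\diff x/F_y$ $(0\le j\le 1)$ really were a basis of $H^0(X,\Omega^1_X)$. Since $\mathcal C$ maps $H^0(X,\Omega^1_X)$ to itself, your four rules force $A_1$, $xA_1$, $B_0$, $B_1$ and $A_0$ to be polynomials of degree at most $2$ (compare coefficients in the $k(x)$-basis $1,y,y^2$ of the function field), hence $\deg A\le 4$ and $\deg B\le 5$. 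But an irreducible affine curve $y^3+A(x)y+B(x)=0$ with those bounds has geometric genus at most $4$: its Newton polygon lies in the hull of $(0,0),(5,0),(4,1),(0,3)$, which has exactly four interior lattice points, and the lattice-point bound on the genus holds in every characteristic. So no trigonal curve of genus $5$ admits your model together with your basis. The structural reason is that $\phi_*\mathcal O_X=\mathcal O\oplus\mathcal O(-3)\oplus\mathcal O(-4)$ has degree $-7$, while the subalgebra $\mathcal O\oplus\mathcal O(-m)y\oplus\mathcal O(-2m)y^2$ generated by any single primitive element has degree $-3m$; as $3\nmid 7$ the inclusion is always strict, so \emph{every} monic cubic model of a genus-$5$ trigonal curve is singular, and its holomorphic differentials are adjoint forms rather than your monomials. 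This also undercuts your endgame: exhibiting a point with $F=F_x=F_y=0$ contradicts nothing, because $X$ is only the normalization of a necessarily singular model; what must be shown is that the rank drop forces \emph{more} singularities (or reducibility) than genus $5$ permits.

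That sharper statement is exactly how the paper closes the argument, and its choice of model is what makes it work: $X$ is realized as the normalization of a plane quintic with a \emph{unique} double point (Lemma \ref{lem 2.2.1}), the equation is normalized in Lemma \ref{prop 3.3.1}, the Hasse--Witt matrix is read off from $F^{p-1}$ by Proposition \ref{prop 2.3.1}, and $\operatorname{rank}(H)\le 2$ is shown to make the quintic reducible or to create a second singular point, contradicting genus $5$. Two further points. First, your ``Maroni-special locus'' is empty: a splitting $\mathcal O\oplus\mathcal O(-2)\oplus\mathcal O(-5)$ would place $X$ on $\mathbb F_3$ in the class $3C_0+8f$, giving $X\cdot C_0=-1<0$, which is impossible for an irreducible curve; so every trigonal curve of genus $5$ is Maroni-balanced and the wild case you defer never arises---but its absence does not repair the main case. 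Second, even within your degenerate analysis, the assumption that $\deg B$ is odd (so that $B_1$ is non-constant and has a zero) is unjustified and model-dependent. The viable repair is to run your Cartier calculus on the singular quintic model with adjoint differentials, which in substance reconstructs the paper's proof.
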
 

\begin{thm}\label{thm p=3}
Let $k$ be an algebraically closed field of characteristic $3$. If $X$ is a trigonal curve of genus 5 defined over $k$, then $a(X)\leq 3$. 
\end{thm}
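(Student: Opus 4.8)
The plan is to translate the statement into a lower bound on the rank of the Cartier operator. Since $a(X)=g-r=5-r$ with $r=\operatorname{rank}\mathcal C$, the assertion $a(X)\le 3$ is equivalent to $r\ge 2$. By Re's bound \cite{rre} recalled above, a trigonal, hence non-hyperelliptic, curve of genus $5$ in characteristic $3$ already satisfies $a(X)\le 4$, so $r\ge 1$, and it suffices to exclude the case $r=1$, that is, to show that $\mathcal C$ cannot have rank exactly $1$.

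First I would put $X$ in a convenient normal form. The degree-$3$ map $\phi\colon X\to\mathbb P^1$ is separable (a purely inseparable cover would force $g=0$), so $k(X)/k(x)$ is a separable cubic extension. In characteristic $3$ the trace of every element of $k(x)$ vanishes, so $\ker(\operatorname{Tr}_{k(X)/k(x)})$ is a $2$-dimensional $k(x)$-space containing $k(x)\cdot 1$; choosing a generator $w$ outside $k(x)$ yields a trace-zero generator whose minimal polynomial has no quadratic term. Hence $X$ is birational to a model $y^3+b(x)y+c(x)=0$ with $b\neq 0$ (by separability), and after clearing denominators we may take $b,c\in k[x]$. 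The decisive simplification is that $F_y=3y^2+b=b(x)$ depends on $x$ only. A Riemann--Hurwitz computation, the different being supported on the zeros of $b$ and over $x=\infty$, pins down the admissible degrees of $b$ and $c$; and since every genus-$5$ trigonal curve has Maroni invariant $1$ and lies on the scroll $S(1,2)$, relative duality gives $H^0(X,\Omega^1_X)\cong H^0(\mathbb P^1,\phi_*\omega_X)=H^0(\mathcal O(-2))\oplus H^0(\mathcal O(1))\oplus H^0(\mathcal O(2))$, so this description is uniform. A basis of regular differentials can then be taken of the shape $\tfrac{g_i(x,y)}{b}\,dx$ with $\deg_y g_i\le 2$, the admissible numerators and $x$-degrees being read off from the degrees of $b$ and $c$.

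To compute $\mathcal C$ on this basis I would use the formula $\mathcal C(f\,dx)=f_2\,dx$, where $f=f_0^3+f_1^3x+f_2^3x^2$ is the unique expansion of $f\in k(X)$ in the basis $\{1,x,x^2\}$ of $k(X)$ over $k(X)^3$ (valid because $x$ is separating). Writing $\tfrac1b=(1/b)^3\,b^2$ and using the Freshman's-dream identity $b(x)^3=\tilde b(x^3)\in k[x^3]$, semilinearity gives $\mathcal C\!\big(\tfrac{P}{b}\,dx\big)=\tfrac1b\,\mathcal C(Pb^2\,dx)$, which reduces everything to polynomial integrands. For a polynomial $P=P_0(x)+P_1(x)y+P_2(x)y^2$ one extracts $f_2$ by solving the three equations obtained from $P=\sum_l f_l^3x^l$ after substituting $y^3=-by-c$ and $y^6=b^2y^2-bcy+c^2$ and separating the powers of $y$; concretely this amounts to collecting the monomials of $x$-degree $\equiv 2\pmod 3$ and extracting cube roots. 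Carrying this out on the five basis forms produces an explicit $5\times 5$ matrix whose entries are polynomials in the coefficients of $b$ and $c$.

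The main obstacle is the final step: showing that this matrix cannot have rank $1$. I would argue by contradiction, assuming $r=1$, so that all $2\times 2$ minors vanish. These minors are explicit polynomial relations among the coefficients of $b$ and $c$, and the aim is to show they are incompatible with $X$ being a \emph{smooth} trigonal curve of genus $5$: the relations should force either $b\equiv 0$ (destroying separability), or a common zero of $b$ with the relevant resultant in $c$ (producing a singular point of the model), or a collapse of the degrees that lowers the genus. I expect this to require a short case analysis according to the degree profile of $(b,c)$, separating in particular the Galois (Artin--Schreier, $b$ constant) case, where the earlier explicit computations of $\mathcal C$ on $y^3-y=f$ apply directly, from the generic non-Galois case; verifying that the rank-$1$ locus meets the smooth genus-$5$ stratum only in degenerate curves is where the real work lies.
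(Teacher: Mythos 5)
Your reduction of the statement to excluding ${\rm rank}(\mathcal{C})=1$ is correct (the paper does the same, using Ekedahl's superspecial bound $g\leq p(p-1)/2$ rather than Re's bound to get ${\rm rank}(\mathcal{C})\geq 1$), and your computational framework is genuinely different from the paper's: you work with a trace-normalized trigonal model $y^3+b(x)y+c(x)=0$ and compute $\mathcal{C}$ by cube-root extraction, whereas the paper realizes $X$ as the normalization of a plane quintic $C$ with a unique double point (Lemma \ref{lem 2.2.1}), reduces $C$ to the normal forms of Lemma \ref{prop 3.3.1 p=3}, and reads off the Hasse--Witt matrix from coefficients of $F^{p-1}$ via the Kudo--Harashita formula (Proposition \ref{prop 2.3.1}). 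The problem is that your proposal stops exactly where the theorem begins. The entire content of the result is the verification that the vanishing of all $2\times 2$ minors of the $5\times 5$ matrix is incompatible with a smooth trigonal curve of genus $5$, and you explicitly defer this step (``where the real work lies''). In the paper this is not a short appendix but essentially the whole proof: it splits into the node case with $(b_0,b_1)\neq (0,0)$, the node case with $b_0=b_1=0$, and the cusp case, each with further sub-cases; in each one the rank-one relations are solved explicitly for the coefficients, and the contradiction is reached either by exhibiting a factorization of the quintic (e.g.\ $F=y(xz^3+a_3x^2yz+a_8x^3y+a_9x^2y^2)$) or by producing a second singular point (e.g.\ $(0:0:1)$ and $(0:1:0)$, or a root of an explicit degree-$12$ polynomial), violating the genus count. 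Without carrying out the analogous elimination in your coordinates --- which involves the same kind of multi-case computation, not an abstract argument --- you have a plan, not a proof.

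A secondary gap: several preliminary claims in your setup are themselves nontrivial in characteristic $3$ and are left unverified. The degree profile of $(b,c)$ via Riemann--Hurwitz must account for wild ramification (any totally ramified fibre of a degree-$3$ map is wildly ramified when $p=3$), so ``the different is supported on the zeros of $b$'' with tame multiplicities is not automatic; and the asserted basis of $H^0(X,\Omega^1_X)$ of the shape $g_i(x,y)b^{-1}\diff x$ requires an adjoint-type computation depending on that degree analysis. These points are fixable, but together with the missing rank-one elimination they mean the proposal does not yet establish the theorem.
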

For $g=5$ and $p=2$, Re's bound says that  $a(X)\leq 3$, while our result implies that $a(X)\leq 2$. Also for $g=5$ and $p=3$, Re's bound says that $a(X)\leq 4$, while our result implies $a(X)\leq 3$.
\section{On the existence of Artin-Schreier curves with prescribed $a$-number}

 Let ${\rm char}(k)=p\geq 3$. Before giving the proof of Theorem \ref{AS THEOREM g-1}, we recall  and prove several results needed for Theorem \ref{AS THEOREM g-1} and give a basis of de Rham cohomology for Artin-Schreier curves.
 
 Since $a(X)+f_X\leq g$, a superspecial curve  has $p$-rank 0. Moreover for superspecial Artin-Schreier curves we have the following result of Irokawa and Sasaki~\cite{MR1118595}.
\begin{thm}\label{AS CURVE zero}
Let $k$ be an algebraically closed field of ${{\rm char}(k)=p\geq 3}$. Let X be a superspecial Artin-Schreier curve with equation $y^p-y=f(x)$, where $f(x) \in k[x]$ and $\deg f(x)=d\geq 2$ with ${\gcd(p,d)=1}$.  Then $X$ is isomorphic to a curve given by $y^p-y=x^d$ with $d | p+1$.
\end{thm}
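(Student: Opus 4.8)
The plan is to translate superspeciality into the vanishing of the Cartier operator and then to compute that operator explicitly on a monomial basis. Recall that $X$ is superspecial exactly when $\mathcal{C}$ acts as $0$ on $H^0(X,\Omega^1_X)$. Since $\gcd(p,d)=1$, the point at infinity is the unique, and totally, ramified point of $x\colon X\to\mathbb{P}^1$, the curve has genus $g=(p-1)(d-1)/2$, and a basis of $H^0(X,\Omega^1_X)$ is given by the forms $x^a y^b\,dx$ with $0\le b\le p-1$, $a\ge 0$ and $pa+db\le p(d-1)-(d+1)$ (the holomorphy condition at infinity, where $v_\infty(x)=-p$, $v_\infty(y)=-d$ and $v_\infty(dx)=(p-1)(d+1)-2p$). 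By Ekedahl's bound $g\le p(p-1)/2$ one already gets $d\le p+1$; the remaining content is the divisibility $d\mid p+1$ and the reduction of $f$ to $x^d$.

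First I would put $f$ into Artin--Schreier normal form. Scaling $y$ by an element of $\mathbb{F}_p^\times$ and $x$ by a scalar, we may take $f$ monic; subtracting a term $h^p-h$ (which does not change the isomorphism class of $X$) we may assume $f$ has no monomial whose exponent is divisible by $p$, so that in particular $\deg f=d$ is preserved; and translating $x$ we may kill one further coefficient. These are exactly the reductions afforded by the isomorphisms $x\mapsto ax+b$, $y\mapsto \alpha y+h(x)$.

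Next comes the core computation. Using $x$ as a separating variable and the $k(X)^p$-basis $1,x,\dots,x^{p-1}$, I would write $y=\sum_{r=0}^{p-1}\psi_r^p\,x^r$, where $\psi_r=-\phi_r$ comes from the decomposition $f=\sum_r \phi_r^p\,x^r$ and $\psi_0=y-\phi_0$; in normal form $\phi_0=0$, so $\psi_0=y$. Expanding $x^a y^b$ multinomially and extracting the coefficient of $x^{p-1}$ yields a closed formula $\mathcal{C}(x^a y^b\,dx)=\bigl(\sum \binom{b}{(e_r)}^{1/p}\cdots\bigr)\,dx$, whose coefficients are, up to $p$-th roots, multinomial coefficients weighted by the coefficients of the powers of $f$. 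For $f=x^d$ this collapses to a monomial map $\mathcal{C}(x^a y^b\,dx)=\binom{b}{j_0}^{1/p}(\pm 1)\,x^{a'}y^{b-j_0}\,dx$, where $j_0$ is the unique index in $\{0,\dots,p-1\}$ with $a_0+r_0 j_0\equiv p-1\pmod p$, writing $d=pm_0+r_0$ and $a=pa_1+a_0$.

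Finally I would impose $\mathcal{C}\equiv 0$, in two steps. Filtering by the weight $(\deg x,\deg y)=(p,d)$, the leading part of $\mathcal{C}$ depends only on the top term $x^d$ of $f$, so its vanishing is exactly the statement that the monomial map above is identically $0$ on the holomorphic range; a direct count of which pairs $(a,b)$ admit an admissible $j_0\le b$ with $\binom{b}{j_0}\not\equiv 0\pmod p$ shows this occurs precisely when $d\mid p+1$. With $d\mid p+1$ fixed, the lower-order corrections coming from the remaining coefficients $c_k$ of $f$ produce off-diagonal entries that must also vanish; an explicit check that the relevant binomial coefficients are nonzero modulo $p$ in this range then forces $c_k=0$ for all $k\ne d$, i.e. $f=x^d$ after the normalizations. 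I expect the main obstacle to be this last combinatorial step: organizing the vanishing conditions so that the coordinate changes already performed account for exactly the spurious freedom and the surviving equations are nondegenerate, which is where the hypotheses $d\ge 2$ and $\gcd(p,d)=1$ are essential.
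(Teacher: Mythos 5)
Your overall strategy coincides with the paper's: translate superspeciality into $\mathcal{C}=0$, take the monomial basis $x^ay^b\,dx$ with $pa+db\le(p-1)(d-1)-2$ (your holomorphy bound $p(d-1)-(d+1)$ is the same number), compute $\mathcal{C}$ explicitly, use the leading term of $f$ to force $d\mid p+1$, and then kill the remaining coefficients. The paper organizes the computation by writing $x^iy^j\,dx=\sum_t(-1)^t\binom{j}{t}x^if^t(x)\,(y^{j-t})^p\,dx$ and using semilinearity to reduce $\mathcal{C}=0$ to $\mathcal{C}(x^if^t(x)\,dx)=0$, then chooses $(a,l)$ with $a+ld=p-1$ exactly, so that only the leading coefficient of $f^l$ can contribute to the coefficient of $x^{p-1}$; this is your $j_0$-argument in different bookkeeping, and that part of your plan is sound.

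The genuine gap is in your final step, and only in the case $d=p+1$. You claim that, after your normalizations, the Cartier-vanishing equations force $c_k=0$ for every $k\neq d$. That is false for $k=1$ when $d=p+1$: the curve $y^p-y=x^{p+1}+a_1x$ is superspecial for \emph{every} $a_1$ (it is isomorphic to $y^p-y=x^{p+1}$), so no equation extracted from $\mathcal{C}=0$ can force $a_1=0$ --- there is no nonzero binomial-coefficient entry to find, and your "nondegeneracy" check would come up empty. Nor do your stated normalizations dispose of $a_1$ in advance: the Artin--Schreier reduction only removes exponents divisible by $p$, and a translation $x\mapsto x+\alpha$ reintroduces the monomials $\alpha x^p$ and $\alpha^{p+1}$, whose removal by $y\mapsto y+\alpha^{1/p}x+\cdots$ changes the coefficient of $x$ again. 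What is actually needed --- and what the paper does --- is the combined substitution $x\mapsto x+\alpha$, $y\mapsto y+b_1x+b_0$ subject to $b_1^p=a_{p+1}\alpha$, $b_1=a_{p+1}\alpha^p+a_1$ and $b_0^p-b_0=a_{p+1}\alpha^{p+1}+a_1\alpha+a_0$; eliminating $b_1$ leaves a polynomial equation of degree $p^2$ in $\alpha$, solvable because $k$ is algebraically closed. So for $d=p+1$ the vanishing of $\mathcal{C}$ only kills $a_2,\dots,a_{p-1}$, and $a_1,a_0$ must be removed by this explicit isomorphism; as written, your argument stalls exactly at this point, while the case $d<p+1$ of your plan goes through as you describe.
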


For the next step, $a(X)=g-1$, we have the following. 
\begin{prop}\label{prop 2.2 a=g-1}
Let $k$ be an algebraically closed field with ${\rm char}(k)=p>0$. Let X be an Artin-Schreier curve of genus $g\geq 1$. If $a(X)=g-1$, then \\
$(1)$ if $p=2$, then $g\leq 3$ and the curve $X$ can be either written as 
$
y^2+y=f(x)
$,
where $f(x)\in k[x]$ and $\deg f(x)=5$ or $7$, or  as $y^2+y=f_0(x)+1/x$ with $\deg f_0(x)=1$ or $3$ and $f_0(x)\in xk[x]$;\\
$(2)$ if $p\geq 3$, then $g\leq (p-1)p/2$ and $X$ is isomorphic to a curve with equation 
\begin{align*}
y^p-y=x^d+a_{d-2}x^{d-2}+\cdots+a_1x, 
\end{align*} 
where $d|p+1$. Moreover if $d=p+1$, then at least one of $a_i$ with $2\leq i\leq d-2$ is non-zero.  If $d<p+1,d~|~ p+1$, then  at least one of $a_i$ with $1\leq i\leq d-2$ is non-zero.
\end{prop}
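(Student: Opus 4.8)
The plan is to extract from the hypothesis $a(X)=g-1$ two pieces of data: the Cartier operator $\mathcal{C}$ on $H^0(X,\Omega^1_X)$ has rank exactly $1$, and the $p$-rank satisfies $f_X\leq g-a(X)=1$. By the Deuring--Shafarevich formula \cite{Subrao1975} one has $f_X=(m-1)(p-1)$, where $m$ is the number of branch points of $X\to\mathbb{P}^1$, so $(m-1)(p-1)\leq 1$. For $p\geq 3$ this forces $m=1$, and after moving the branch point to $\infty$ the function $f$ becomes a polynomial; for $p=2$ it only gives $m\leq 2$. I would therefore treat $p\geq 3$ and $p=2$ separately.

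For part $(2)$ I would first normalize $f$ by standard Artin--Schreier substitutions $y\mapsto y+h(x)$ (which replace $f$ by $f-h^p+h$ and so remove every monomial of degree divisible by $p$, in particular the constant term) together with scaling and an affine shift of $x$, arranging that $d=\deg f$ is prime to $p$, that the leading coefficient is $1$, and that the coefficients of $x^{d-1}$ and $x^0$ vanish. Then $g=(p-1)(d-1)/2$, so the bound $g\leq p(p-1)/2$ is equivalent to $d\leq p+1$. Next I would use the basis $B=\{x^iy^j\diff x : ip+jd\leq (p-1)(d-1)-2\}$ of \cite{MR0393035} and the expansion $\mathcal{C}(x^iy^j\diff x)=\sum_{t=0}^{j}(-1)^t\binom{j}{t}y^{j-t}\,\mathcal{C}(x^if^t(x)\diff x)$, exactly as in the superspecial argument behind Theorem \ref{AS CURVE zero}. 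Since $1,y,\dots,y^{p-1}$ are linearly independent over $k(x)$, the rank-$1$ hypothesis says precisely that all the differentials $\mathcal{C}(x^iy^j\diff x)$ with $x^iy^j\diff x\in B$ are proportional to one fixed holomorphic form.

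The heart of the proof, and the step I expect to be hardest, is to deduce $d\mid p+1$ from this proportionality. I would exhibit the resonant form $x^ay^l\diff x$ with $a=p-1-ld$ and $ld\leq p-1<(l+1)d$: a direct degree count shows that $\mathcal{C}(x^af^t(x)\diff x)=0$ for $t<l$ while for $t=l$ the unremovable leading term $a_d^{\,l}x^{p-1}$ survives, so $\mathcal{C}(x^ay^l\diff x)=(-1)^l a_d^{\,l/p}\diff x$ is a nonzero multiple of $\diff x$. If $d\mid p+1$ then $a=d-2$ and this form leaves $B$, which is harmless; but if $d\nmid p+1$ then $a\leq d-3$, the form lies in $B$, and the plan is to produce a \emph{second} basis differential whose $\mathcal{C}$-image has a nonzero component in some $x^{i'}y^{j'}\diff x$ with $j'\geq 1$, hence is not proportional to $\diff x$, giving rank $\geq 2$ and a contradiction. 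Concretely I would search among the resonant exponents $i+td\equiv -1\pmod p$ for a triple $(i,j,t)$ with $t\leq j-1$, $p\nmid\binom{j}{t}$ and $x^iy^j\diff x\in B$, for which the leading term of $x^if^t(x)$ forces $\mathcal{C}(x^if^t(x)\diff x)\neq 0$; establishing that such a triple exists for every forbidden $d$, while respecting the membership inequalities defining $B$, is the delicate combinatorial obstacle. Once $d\mid p+1$ is in hand, the remaining assertions are soft: if all the lower coefficients vanished then $f$ would be equivalent to $x^d$ and Theorem \ref{AS CURVE zero} would make $X$ superspecial, i.e. $a(X)=g$, contradicting $a(X)=g-1$; hence at least one coefficient survives, and performing the extra change of coordinates available when $d=p+1$ (as in the corresponding case of Theorem \ref{AS CURVE zero}) removes the $x$-term, shifting the surviving range from $1\leq i\leq d-2$ to $2\leq i\leq d-2$.

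For part $(1)$, with $p=2$ and $m\leq 2$, I would normalize the two possibilities: either $f$ is a polynomial of odd degree $d$ (one branch point, so $g=(d-1)/2$ by Riemann--Hurwitz), or there is exactly one finite branch point, which after a M\"obius change of the coordinate and an Artin--Schreier substitution contributes a simple pole $1/x$ at $0$ together with a polynomial part $f_0\in xk[x]$ at $\infty$ (so $g=(\deg f_0+1)/2$). In each case I would apply the explicit description of the Cartier--Manin matrix of $\mathbb{Z}/2$-covers of $\mathbb{P}^1$ due to Elkin and Pries \cite{MR3095219} to compute its rank; imposing rank $=1$ then pins down $\deg f\in\{5,7\}$ in the first case and $\deg f_0\in\{1,3\}$ in the second, and in particular yields $g\leq 3$.
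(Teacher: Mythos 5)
Your strategy for part $(2)$ is essentially the paper's own: normalize $f$ by Artin--Schreier substitutions, use Sullivan's basis $B$ and the expansion $\mathcal{C}(x^iy^j\diff x)=\sum_t(-1)^t\binom{j}{t}y^{j-t}\mathcal{C}(x^if^t(x)\diff x)$, and force $d\mid p+1$ by producing two basis forms with linearly independent Cartier images whenever $d$ is ``forbidden''. One step of yours is actually cleaner than what the paper writes: you derive $f\in k[x]$ for $p\geq 3$ (and at most one finite pole for $p=2$) from the Deuring--Shafarevich formula combined with $a(X)+f_X\leq g$. The paper instead only records ${\rm rank}(\mathcal{C})\geq m$ from the logarithmic forms $x_s\diff x$, which yields $m\leq 1$ but not $m=0$, and then tacitly takes $f$ polynomial in part $(2)$; your argument closes that loophole.

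However, there is a genuine gap, and you flagged it yourself: the existence, for every $d\nmid p+1$, of a \emph{second} basis differential whose image is not proportional to $\diff x$ is precisely the content of the proposition, and you leave it as a ``plan'' rather than establish it. The paper's proof is exactly this case analysis, and none of it is automatic. $(a)$ For $d\geq p+2$, write $d=lp+b$ with $1\leq b\leq p-1$; then $x^{p-1}\diff x\in B$ has image $\diff x$, while $x^{p-1-b}y\diff x\in B$ has image $-\mathcal{C}(x^{p-1-b}f\diff x)$, whose leading term is a nonzero multiple of $x^l\diff x$ with $l\geq 1$, giving ${\rm rank}(\mathcal{C})\geq 2$ and hence $d\leq p+1$ (your ``equivalence'' of the genus bound with $d\leq p+1$ presupposes this). $(c)$ For $p\geq 7$, $d\leq p+1$, $d\nmid p+1$, with $b'=p-1-ld\leq d-3$, one must check that \emph{both} $x^{b'}y^{l}\diff x$ and $x^{b'}y^{l+1}\diff x$ lie in $B$; the membership inequality reduces to $d\leq(d-b'-2)(p-1)-2$, which works for $d\leq p-3$ but fails to be available in the boundary cases $d=p-1$ ($b'=0$) and $d=p-2$ ($b'=1$), which need separate checks, as does $(b)$ the case $p=5$, $d=4$, since the general argument requires $p\geq 7$. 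Independence then follows because $\mathcal{C}(x^{b'}y^{l}\diff x)=\pm a_d^{l/p}\diff x$ while $\mathcal{C}(x^{b'}y^{l+1}\diff x)$ has $y\diff x$-coefficient $\pm(l+1)a_d^{l/p}\neq 0$, using $p\nmid l+1$. Until these membership and independence verifications are written out (and, in part $(1)$, until the rank computation you delegate to Elkin--Pries is actually performed for $d\geq 9$ versus $d=5,7$ and for the one-finite-pole case), the proposal is a correct outline of the paper's proof rather than a proof.
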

\begin{proof}
Suppose that $f(x)$ has poles at $\infty, Q_1,\dots,Q_m$ for some $m\in \mathbb{Z}_{\geq 0}$. Let $x-\xi_i$ be a local parameter at $Q_i$. Write $x_i=1/(x-\xi_i)$ for $i=1,\dots,m$ and $x_0=x$.  
Then $f(x)$ can be written as 
\begin{align}
f(x)=f_0(x)+\sum_{i=1}^mf_i(1/(x-\xi_i))=\sum_{i=0}^mf_i(x_i)\, , \label{f(x) AS general p}
\end{align}
where $\deg f_i(x)=d_i$.
By \cite[Lemma 1]{MR0393035}, a basis of $H^0(X,\Omega _X^1)$ is given by $B=\cup_{s=0}^mB_s$ where
\begin{align*}
B_0&=\{x^iy^j\diff x|i,j\in\mathbb{Z}_{\geq 0},~ip+jd\leq(p-1)(d_0-1)-2\} ,\\
B_s&=\{x_s^iy^j\diff x|~i\in\mathbb{Z}_{\geq 1},j\in\mathbb{Z}_{\geq 0},~ip+jd\leq(p-1)(d_s+1)\}, s=1,\dots, m\, .
\end{align*}
The condition $a(X)=g-1$ is equivalent to the rank of the Cartier operator  ${\rm rank}(\mathcal{C})$ being equal to $1$. Note that if $f(x)=\sum_{i=0}^mf_i(x_i)$ as in $(\ref{f(x) AS general p})$, we always have $x_s\diff x\in B$ for $1\leq s\leq m$. Note that $\mathcal{C}(x_s\diff x)\neq 0$ and we get ${\rm rank}(\mathcal{C})\geq m$. \\
$(1)$ For $p=2$, we consider distinct cases for $f(x)$. If $f(x)\in k[x]$ with $\deg f(x)=d$ and $d$ odd, then a basis of $H^0(X,\Omega_X^1)$ is 
\begin{align*}
B&=\{x^iy^j\diff x |~i,j\in\mathbb{Z}_{\geq 0},2i+jd\leq d-3\}=\{x^i\diff x|~i\in\mathbb{Z}_{\geq 0},2i\leq d-3, ~d\geq 3\}.
\end{align*}
Since ${\rm rank}(\mathcal{C})=1$, we must have $x\diff x\in B$ and hence $d\geq 5$. 

Suppose $d\geq 9$, we have $x\diff x$, $x^3\diff x \in B$ and hence
$
\mathcal{C}(x^i\diff x)= x^{(i-1)/2}\diff x
$ for $i=1,3$, a contradiction since ${\rm rank}(\mathcal{C})=1$.
Now if $d=5$ (resp. $7$), then  $B=\{\diff x, x\diff x\}$ (resp. $B=\{\diff x, x\diff x,x^2\diff x\}$) with $\mathcal{C}(\diff x)=\mathcal{C}(x^2\diff x)=0$ and $\mathcal{C}(x\diff x )=\diff x\neq 0$. Hence we have ${\rm rank}(\mathcal{C})=1$ for both cases.

Now if $f(x)=f_0(x_0)+f_1(x_1)$ with $m=1$ as in $(\ref{f(x) AS general p})$, we have $\deg f_0(x_0)=d_0\leq 3$ and $\deg f_1(x_1)=d_1=1$. By putting a pole at $0$ and by scaling, we arrive at $f_1(x_1)=x_1$. Then the curve $X$ can be written as $y^2+y=f_0(x)+1/x$ with $\deg f_0(x)\leq 3$. Again by  a change of coordinates $y \mapsto y+b_0$ with $b_0\in k$ and $b_0^2+b_0=f_0(0)$ we get $f_0(x)\in xk[x]$.\\
$(2)$ If $p\geq 3$, we show the following:\\
$(a)$  For all $p\geq 3$, we have  $d \leq p+1$; \\
$(b)$ If $p=5$ and $d=4$, then ${\rm rank}(\mathcal{C})\geq 2$;\\
$(c)$ If $p\geq 7$ and  $d\geq 3$ with $d\nmid p+1$, then ${\rm rank}(\mathcal{C})\geq 2$.\\
After excluding the cases where ${\rm rank}(\mathcal{C})\geq 2$ or ${\rm rank}(\mathcal{C})=0$, what is left are curves with $a$-number $g-1$.
Note that if $d=2$ and $f(x)\in k[x]$, the curve with equation $y^p-y=f(x)$  is superspecial.

By a change of coordinates, we may assume 
\begin{align*}
f(x)=x^d+a_{d-2}x^{d-2}+\cdots+a_1x+a_0,~ d\geq 3\, .
\end{align*}
$(a)$ If $d\geq p+2$, then by definition we have $x^{p-1}\diff x \in B$.
There exists $l,b\in \mathbb{Z}_{\geq 0}$ such that 
$
d=lp+b$ with $l=1$ and $2\leq b\leq p-1$ or $l\geq 2$ and  $1\leq b\leq p-1
$.
One can show  $x^{p-1-b}y\diff x \in B$ by checking $(p-1-b)p+d\leq (p-1)(d-1)-2$. Then
\begin{align*}
\mathcal{C}(x^{p-1-b}y\diff x)&=\mathcal{C}(x^{p-1-b}(y^p-f(x))\diff x)\\
&=y\mathcal{C}(x^{p-1-b}\diff x)-\mathcal{C}(x^{p-1-b}f(x)\diff x)\neq 0
\end{align*}
as the leading term of $x^{p-1-b}f(x)$ is $x^{lp+p-1}$. This contradiction shows that  $d\leq p+1$.\\
$(b)$ For $p=5$, by $(a)$ we have $d\leq p+1$. Suppose $d\nmid p+1$, we have $d=4$ and 
$y^1\diff x,y^2\diff x \in B$.
Additionally, we have
$
\mathcal{C}(y^i\diff x)=y^{i-1}\diff x
$ for $i=1,2$ and hence 
${\rm rank}(\mathcal{C})\geq 2$, a contradiction. We therefore have $d|p+1$.\\
$(c)$ For $p\geq 7$ and $d\leq p+1$, assume we have $d\nmid p+1$. 	Then there exists $l\in \mathbb{Z}_{>0}$ such that 
$
ld\leq p\leq (l+1)d
$.
Furthermore, we have
$
ld\leq p-1$ and $(l+1)d\geq p+2$ 
as $\gcd(d,p)=1$ and $d\nmid p+1$.
Then there exists $b'$ satisfying 
$
ld+b'=p-1$ for $0\leq b'\leq d-3$.

 If $d=p-1$, then $l=1$, $b'=0$, we get 
$
y\diff x,~y^2\diff x\in B$ and 
$\mathcal{C}(y^i\diff x)=y^{i-1}\diff x
$ for $i=1,2$. This implies ${\rm rank}(\mathcal{C})\geq 2$, a contradiction.
If $d=p-2$, $l=1$ and $b'=1$, then we have
$
i(p-1)\leq (p-1)(p-2)-2$,
which implies $xy\diff x,~xy^2\diff x\in B$. Then $
\mathcal{C}(xy\diff x)
$ and $
\mathcal{C}(xy^2\diff x)
$ are linearly independent and hence ${\rm rank}(\mathcal{C})\geq 2$. Now if $d\leq p-3$, we show that
$
x^{b'}y^{l}\diff x \in B
$.
This is equivalent to showing 
$ld+b'p\leq (p-1)(d-1)-2$. 
By substituting $b'$ with $b'=p-1-ld$ in the inequality, we only need to show
$
d(l+1)(p-1)\geq p^2+1
$,
which is clear since $(l+1)d\geq p+2$.

Now we show that 
$x^{b'}y^{l}\diff x,
x^{b'}y^{l+1}\diff x \in B$. It suffices to show
$ld+d+b'p\leq (p-1)(d-1)-2$. We have $ld+d+b'p\leq (p-1)-b'+d+b'p$ as $b'=p-1-ld$. Hence we only need to show
\begin{align}
d\leq (d-b'-2)(p-1)-2 \label{d<(d-b'-2)(p-1)-2}\, .
\end{align} 
Note that $b'\leq d-3$, we have $(d-b'-2)(p-1)-2\geq p-3\geq d$. Then $x^{b'}y^{l}\diff x,
x^{b'}y^{l+1}\diff x \in B$ and 
\begin{align*}
\mathcal{C}(x^{b'}y^j\diff x)=\sum_{t=0}^{j}(-1)^t{\binom{j}{t}(y^{l-t})\mathcal{C}(x^{b'}f^t(x)\diff x)}=0,~j=l,l+1\, .
\end{align*}
Put $t=l$, then 
$
\mathcal{C}(x^{b'}f^l(x))=\mathcal{C}(x^{b'+ld}+\cdots)\diff x)\neq 0
$,
which implies ${\rm rank}(\mathcal{C})\geq 2$. Therefore we have $d|p+1$.
\end{proof}

Now we will  use the de Rham cohomology $H^1_{dR}(X)$ for a curve $X$ of genus $g$. Recall that this is a vector space of dimension $2g$ provided with a non-degenerate pairing,  cf. \cite[Section 12]{Oort1999}.
Let $X$ be an Artin-Schreier curve over $k$ of genus $g$ with equation 
\begin{align}\label{AS equation p}
y^p-y=h(x)\, ,
\end{align}
where $h(x)\in k[x]\backslash k$ is non-zero of degree $d$. 
Let $\pi : X\to \mathbb{P}^1$ be the $\mathbb{Z}\slash p$-cover. Put $U_1=\pi ^{-1}(\mathbb{P}^1-\{0\})$ and $U_2=\pi ^{-1}(\mathbb{P}^1-\{\infty\})$.  For the open affine cover $\mathcal{U}=\{U_1,U_2\}$, we consider the de Rham cohomology $H^1_{dR}(X)$ as in \cite[Section 5]{MR0241435}, i.e.
$$
H^1_{dR}(X)=Z_{dR}^1(\mathcal{U})/B_{dR}^1(\mathcal{U})
$$
with
$
Z_{dR}^1(\mathcal{U})=\{(t,\omega_1,\omega_2)|t\in \mathcal{O}_X(U_1\cap U_2),\omega_i\in \Omega_X^1(U_i),\diff t=\omega_1-\omega_2\}
$
and 
$B_{dR}^1(\mathcal{U})=\{(t_1-t_2,\diff t_1,\diff t_2)|t_i\in \mathcal{O}_X(U_i)\}$. 

Under the action of Verschiebung operator $V$ on $H^1_{dR}(X)$, one has $V(H^1_{dR}(X))=H^0(X,\Omega_X^1)$ and $V$ coincides with the Cartier operator on $H^0(X,\Omega_X^1)$. 

For $1\leq i \leq g$, put $s(x)=xh'(x)$ with $h'(x)$ the formal derivative of $h(x)$ and write $s(x)=s^{\leq i}(x)+s^{> i}(x)$ with $s^{\leq i}(x)$ the sum of monomials of degree $\leq i$.
Then we have the following proposition.
\begin{prop}\label{prop de Rham basis of y^p-y=h(x)}
Let $X$ be an  Artin-Schreier curve over $k$ with equation 
$
y^p-y=h(x)
$, where $h(x)\in k[x]$ and  $\deg h(x)=d$.
Then $H^1_{dR}(X)$ has a basis with respect to $\mathcal{U}=\{U_1,U_2\}$ consisting of the following residue classes with representatives in $Z_{dR}^1(\mathcal{U})$:
\begin{align}
\alpha_{i,j}&=[(0,x^iy^j\diff x,x^iy^j\diff x)], \\
\beta_{i,j}&=[(\frac{y^{p-1-j}}{x^{i+1}},-\frac{\phi_{i,j} (x,y)}{x^{i+2}}{\rm d}x,\frac{(p-1-j)s^{> i+2}(x)y^{p-2-j}}{x^{i+2}}{\rm d}x)],\label{de rham basis AS H^1}
\end{align}
where  $i,j\in\mathbb{Z}_{\geq 0}, pi+jd\leq (p-1)(d-1)-2$ and  $\phi_{i,j}(x,y)=(p-1-j)s^{\leq i+2}(x)y^{p-2-j}+(i+1)y^{p-1-j}$.

\end{prop}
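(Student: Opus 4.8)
The plan is to exhibit $2g$ explicit cocycles and prove they are linearly independent, which forces them to be a basis since $\dim_k H^1_{dR}(X)=2g$. First I would record that the pairs $(i,j)$ with $i,j\in\mathbb{Z}_{\geq 0}$ and $pi+jd\leq (p-1)(d-1)-2$ are exactly the indices of the standard basis $\{x^iy^j\diff x\}$ of $H^0(X,\Omega^1_X)$ coming from \cite[Lemma 1]{MR0393035}; hence there are $g$ of them, yielding $g$ classes $\alpha_{i,j}$ and $g$ classes $\beta_{i,j}$. It then suffices to check that (i) each triple lies in $Z^1_{dR}(\mathcal{U})$ and (ii) the resulting $2g$ classes are independent in $H^1_{dR}(X)$.

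For (i), the class $\alpha_{i,j}$ is immediate: $x^iy^j\diff x$ is holomorphic by the index bound, hence lies in $\Omega^1_X(U_1)\cap\Omega^1_X(U_2)$, and the cocycle relation reads $\diff 0=\omega_1-\omega_2=0$. For $\beta_{i,j}$ I would differentiate $t=y^{p-1-j}/x^{i+1}$, using $\diff y=-h'(x)\diff x$ (from $y^p-y=h(x)$ in characteristic $p$) and $xh'(x)=s(x)$, to obtain
\begin{align*}
\diff t=-x^{-(i+2)}\bigl[(i+1)y^{p-1-j}+(p-1-j)s(x)y^{p-2-j}\bigr]\diff x .
\end{align*}
Splitting $s=s^{\leq i+2}+s^{> i+2}$ then gives exactly $\diff t=\omega_1-\omega_2$, so the cocycle condition holds. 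The regularity of $t$ on $U_1\cap U_2$ and of $\omega_2$ on $U_2$ is clear, the latter because $s^{> i+2}(x)/x^{i+2}$ involves only positive powers of $x$ and so has no pole over $0$. The delicate point is that $\omega_1$ must be regular on $U_1$, hence at the points over $\infty$. Assuming $\gcd(p,d)=1$ (otherwise one reduces $h$), there is a single totally ramified point $\infty_X$ with $v_{\infty_X}(x)=-p$, $v_{\infty_X}(y)=-d$ and $v_{\infty_X}(\diff x)=2g-2$; one then checks monomial by monomial that the truncation in $\phi_{i,j}$ is engineered so that every surviving term of $\omega_1$ has non-negative valuation at $\infty_X$, the term $(i+1)y^{p-1-j}$ being controlled by the index bound $pi+jd\leq(p-1)(d-1)-2$. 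This valuation bookkeeping is the main computational obstacle.

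For (ii) I would use the Hodge--de Rham sequence $0\to H^0(X,\Omega^1_X)\to H^1_{dR}(X)\xrightarrow{\rho} H^1(X,\mathcal{O}_X)\to 0$, where $\rho[(t,\omega_1,\omega_2)]=[t]$. Since $\rho(\alpha_{i,j})=0$ and the $\alpha_{i,j}$ are the standard basis of $H^0(X,\Omega^1_X)=\ker\rho$, it remains to show that the images $\rho(\beta_{i,j})=[\,y^{p-1-j}/x^{i+1}\,]$ form a basis of $H^1(X,\mathcal{O}_X)$. As $\dim_k H^1(X,\mathcal{O}_X)=g$ matches the number of classes, this reduces to their linear independence.

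The cleanest route I foresee for this last step is Serre duality: pair $H^1(X,\mathcal{O}_X)$ against the known basis $\{x^{i'}y^{j'}\diff x\}$ of $H^0(X,\Omega^1_X)$ by the \v{C}ech residue $\langle[t],\omega\rangle=\sum_{P\mid 0}\mathrm{Res}_P(t\omega)$, the sum running over the unramified points above $0$, and show the resulting $g\times g$ matrix is non-degenerate by ordering the indices so that it becomes triangular. Equivalently one can compute $H^1(X,\mathcal{O}_X)=\mathcal{O}_X(U_1\cap U_2)/(\mathcal{O}_X(U_1)+\mathcal{O}_X(U_2))$ directly: reducing modulo $\mathcal{O}_X(U_2)$ keeps the strictly negative powers of $x$, and via $v_{\infty_X}(x^{-n}y^{m})=np-md$ one identifies $\mathcal{O}_X(U_1)$ with the span of the monomials regular at $\infty_X$, so that the $t_{i,j}$ are exactly the monomials with poles both over $0$ and over $\infty$. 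The difficulty here is that $1,y,\dots,y^{p-1}$ is not adapted to the valuation at $\infty_X$, so controlling $\mathcal{O}_X(U_1)$ requires the same local analysis at $\infty_X$ as in step (i); once that is in place, non-degeneracy of the pairing (equivalently, independence of the $[t_{i,j}]$) follows, and the $2g$ classes form the asserted basis.
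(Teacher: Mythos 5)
Your overall strategy is the same as the paper's: use the Hodge--de Rham sequence $0\to H^0(X,\Omega^1_X)\to H^1_{dR}(X)\xrightarrow{\rho}H^1(X,\mathcal{O}_X)\to 0$, take the $\alpha_{i,j}$ as the standard basis of $H^0(X,\Omega^1_X)$, compute $H^1(X,\mathcal{O}_X)=\mathcal{O}_X(U_1\cap U_2)/(\mathcal{O}_X(U_1)+\mathcal{O}_X(U_2))$ and show the monomials $y^{p-1-j}/x^{i+1}$ represent a basis of it (your ``equivalently'' route is exactly what the paper does; the Serre-duality detour is unnecessary, since for $\gcd(p,d)=1$ distinct monomials $x^ay^b$, $0\leq b\leq p-1$, have distinct valuations at $\infty_X$, so $\mathcal{O}_X(U_1)$ is spanned by the monomials regular at $\infty_X$ and the quotient is immediate), and verify $\diff t=\omega_1-\omega_2$ by differentiation.

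The genuine gap is precisely at the step you defer, and it is not a bookkeeping obstacle that can be discharged: the claim that ``the truncation in $\phi_{i,j}$ is engineered so that every surviving term of $\omega_1$ has non-negative valuation at $\infty_X$'' is false for the truncation $s^{\leq i+2}$ in the statement. With $v_{\infty_X}(x)=-p$, $v_{\infty_X}(y)=-d$, $v_{\infty_X}(\diff x)=2g-2=pd-p-d-1$, a monomial $cx^m$ of $s^{\leq i+2}$ contributes to $\omega_1$ the term $-c(p-1-j)x^{m-i-2}y^{p-2-j}\diff x$, whose valuation at $\infty_X$ is $p(i+2-m)+dj+d-p-1$; for $m=i+2$ this equals $dj+d-p-1$, which is negative whenever $dj+d\leq p$, and since distinct monomials have distinct valuations nothing can cancel this pole. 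Concretely, for the paper's own curve $y^5-y=x^3+a_1x$ and $(i,j)=(1,0)$ one has $s=3x^3+a_1x=s^{\leq 3}$, so $\omega_1=-(2y^3+4a_1y^3/x^2+2y^4/x^3)\diff x$ has a pole of order $3$ at $\infty_X\in U_1$, and the displayed triple is \emph{not} in $Z^1_{dR}(\mathcal{U})$. (Your side remark that the term $(i+1)y^{p-1-j}$ is controlled by the index bound is also off: its valuation $pi+dj+p-1$ is non-negative for all $i,j\geq 0$, no bound needed.) What is actually required is a correction of the statement: split $s=s^{\leq i+1}+s^{>i+1}$ instead. Then every term of $\omega_1$ coming from $x^m$ with $m\leq i+1$ has valuation at least $dj+d-1>0$ at $\infty_X$, while $s^{>i+1}(x)/x^{i+2}$ is still a polynomial, so $\omega_2$ remains regular on $U_2$; with this amendment your argument goes through. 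For what it is worth, you located the real crux: the paper's own proof only verifies the identity $\diff t=\omega_1-\omega_2$ and never checks regularity of $\omega_1$ on $U_1$ at all, which is where the stated form of the proposition breaks down.
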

\begin{proof}
Clearly,  $\omega_{i,j}=x^iy^j\diff x$ form a basis of $H^0(X,\Omega_X^1)$ for $i,j\in \mathbb{Z}_{\geq 0}$ with $pi+dj\leq (p-1)(d-1)-2$.
On the other hand, we may identify $\mathcal{O}_X(U_2)$ with the $k$-algebra $k[x,y]$ defined by $(\ref{AS equation p})$. Moreover, $x^iy^j$ with $i\geq 0, 0\leq j\leq p-1$ form a basis of the image of $\mathcal{O}_X(U_2)$ in $\mathcal{O}_X(U_1\cap U_2)$. Additionally, we have $x^iy^j\in\mathcal{O}_X(U_1)$ for  $0\leq j\leq p-1$ and $-pi\geq dj$. Then the residue classes $[x^iy^j]$ form a basis of $H^1(X,\mathcal{O}_X)$ for $i<0,0\leq j\leq p-1$ and $-pi-dj< 0$. By substituting $i=-(i'+1), j=p-1-j'$, the residue classes $[x^{i+1}y^{p-1-j}]$ form a basis with $i\geq 0,0\leq j\leq p-1$  and $pi+jd\leq (d-1)(p-1)-2$.

Now we check the equality that $\diff f_{i,j}=\omega_{i,j,1}-\omega_{i,j,2}$ for residue classes $\beta_{i,j}=[(f_{i,j},\omega_{i,j,1},\omega_{i,j,2})]$. Note that 
\begin{align*}
\diff f_{i,j}&=\diff \frac{y^{p-1-j}}{x^{i+1}}=\frac{(p-1-j)x^{i+1}y^{p-2-j}\diff y}{x^{2i+2}}-\frac{(i+1)x^{i}y^{p-1-j}\diff x}{x^{2i+2}}\\
&=\frac{-(p-1-j)x^{i+1}y^{p-2-j}h'(x)\diff x}{x^{2i+2}}-\frac{(i+1)x^{i}y^{p-1-j}\diff x}{x^{2i+2}}\\
&=-\frac{\psi_{i,j}(x,y)\diff x}{x^{i+2}}-\frac{(p-1-j)y^{p-2-j}s^{>i+2}(x)\diff x}{x^{i+2}}=\omega_{i,j,1}-\omega_{i,j,2}\, ,
\end{align*}
%&=\frac{-(p-1-j)y^{p-2-j}xh'(x)\diff x}{x^{i+2}}-\frac{(i+1)y^{p-1-j}\diff x}{x^{i+2}}\\
which ends the proof.
\end{proof}
\begin{rmk}
The pairing $\langle~, ~\rangle$ for this basis is as follows:  $\langle\alpha_{i_1,j_1},\beta_{i_2,j_2} \rangle \neq 0$ if $(i_1,j_1)=(i_2,j_2)$ and $\langle\alpha_{i_1,j_1},\beta_{i_2,j_2} \rangle = 0$ otherwise. Indeed, for $(i_1,j_1)=(i_2,j_2)$ we have $\ord_{\infty}(y^{p-1}/x\diff x)=-1$ and hence $\langle\alpha_{i_1,j_1},\beta_{i_2,j_2} \rangle \neq  0$. For  other cases,  the proof is similar to the proof of \cite[Theorem 4.2.1]{soton373877}.
\end{rmk}
\subsection{Proof of Theorem \ref{AS THEOREM g-1}} Now we prove  Theorem \ref{AS THEOREM g-1} by showing ${\rm rank}(\mathcal{C})=1$.
For $d\leq 2$, the situation is trivial and ${\rm rank}(\mathcal{C})=0$ for all $p>0$. Then we may assume that the polynomial $f(x)$ has the form:
\begin{align*}
f(x)=x^d+a_{d-2}x^{d-2}+\cdots+a_1x\, , d>2\, .
\end{align*}
Also a basis of $H^0(X,\Omega _X^1)$ is given by forms below:
\begin{align*}
B=\{x^iy^j\diff x|ip+jd\leq(p-1)(d-1)-2\}\, .
\end{align*}
$(1)$ For $p\geq 7$, if $a_i=0$ for $ i\in \{1,2,\ldots
,d-2,d\}$, then by Theorem \ref{AS CURVE zero} we have  ${\rm rank}(\mathcal{C})=0$. Otherwise, let $i_0$ be the largest integer in $\{1,2,\ldots
,d-2\}$ such that $a_{i_0}\neq 0$. There are non-negative integers $l$, $m$, $b$ satisfying
$
ld=p+1$
and $d-2=mi_0+b$ with $b\leq i_0-1$.

Suppose  $2\leq i_0\leq d-2$, we show that  
$
x^by^{l-1+m}\diff x\in B\, .
$
This is equivalent to showing
\begin{align*}
bp+(l+m-1)d\leq (d-1)(p-1)-2,
\end{align*}
for $m\geq 1$, $i_0\geq 2$. By substituting $b=d-2-mi_0$, one can show this is equivalent to
$
m(pi_0-d)\geq 2\, ,
$
which is trivial as $d|p+1$ and $m(pi_0-d)\geq 2p-d\geq 2$.

Now if $d=p+1$, then we have $l=1$ and $x^by^m\diff x \in B$ as showed above.  If $b=0$, we have $d-2=p-1=mi_0$. By $i_0\geq 2$, we have $m\leq (p-1)/2$. We show that $y^{m+1}\diff x\in B$ if $p\geq 5$. It is sufficient to show that  $(m+1)\leq (p-1)(d-1)-2=p(p-1)-2$. This is true for $p\geq 5$. Then $\mathcal{C}(y^{m+1}\diff x)\neq 0$ and $\mathcal{C}(y^m\diff x)\neq 0$ are linearly independent. This implies ${\rm rank}(\mathcal{C})\geq 2$ for $b=0$. Suppose $b\geq 1$. We show that 
$
x^{b-1}y^{m+1}\in B\, .
$
Note that $d-2=p-1=mi_0+b$. By a similar fashion, we only need to show 
$
m(i_0-1)(p-1)\geq 4\, , 
$
which is true if $p\geq 5$.
Then
\begin{align*}
\omega_{b,m}:=\mathcal{C}(x^by^m\diff x)=\mathcal{C}(x^b(y^p-f(x))^m\diff x)&=\mathcal{C}((-1)^mx^ba_{i_0}^m(x^{i_0})^m\diff x)+\cdots\\
&=\mathcal{C}((-1)^ma_{i_0}^mx^{p-1}\diff x)+\cdots\neq 0\, .
\end{align*}
Similarly, we have
\begin{align*}
\omega_{b-1,m+1}:&=\mathcal{C}(x^{b-1}y^{m+1}\diff x)=\mathcal{C}(x^{b-1}(y^p-f(x))^{m+1}\diff x)\\
&=\mathcal{C}((m+1)(-1)^{m+1}a_{p+1}a_{i_0}^mx^{2p-1}\diff x)+\cdots\neq 0\, .
\end{align*}
Since $\omega_{b,m}$ and $\omega_{b-1,m+1}$ are $k$-linearly independent, we have ${\rm rank}(\mathcal{C})\geq 2$. 

If $d|p+1$ and $d\leq (p+1)/{2}$, we show that 
$
x^by^{l+m}\diff x \in B\, ,
$
which is equivalent to
$
bp+(l+m)d\leq (d-1)(p-1)-2
$. Since $m(pi_0-d)\geq 2p-d$,
we only need to show
$
m(pi_0-d)-d\leq 2
$, which is true for $p\geq 7$. 
Hence $\mathcal{C}(x^by^{l+m}\diff x)\neq 0$ and $\mathcal{C}(x^by^{l+m-1}\diff x)\neq 0$ by the same method above.

Assume $i_0=1$ and  $a_i=0$ for any $i\in {2,3,\ldots,d-2}$, if $d=p+1$, by a simple change of coordinates the curve is superspecial and ${\rm rank}(\mathcal{C})=0$. Otherwise we have $d<p+1$, in this case we have 
$
d-2=m+b
$.
We show that 
$
y^{l+m+b-1}\diff x, y^{l+m+b}\diff x\in B\, ,
$
which is equivalent to showing
\begin{align*}
(l+m+b-1)d&\leq (d-1)(p-1)-2\text{ and }
(l+m+b)d\leq (d-1)(p-1)-2\, ,
\end{align*}
respectively. These can be simplified to 
\begin{align*}
d^2-(p+2)d+2p+2&\leq 0, \quad
d^2-(p+1)d+2p+2\leq 0\, .
\end{align*}
These two inequalities hold for $p\geq 11$. For $p=7$, we have $d|p+1=8$ and hence $d\geq 4$. Then those two inequalities also hold.

Moreover, we have
\begin{align*}
\mathcal{C}(y^{l+m+b-1}\diff x)&=\mathcal{C}((y^p-f(x))^{l+m+b-1}\diff x)\\
&=\mathcal{C}((-1)^{l+m+b-1}(x^d)^{l-1}(a_1x)^{m+b}\diff x)+\cdots \\
&=\mathcal{C}((-1)^{l+m+b-1}a_1^{m+b}x^{p-1}\diff x)+\cdots
\neq 0 
\end{align*}
and
$
\mathcal{C}(y^{l+m+b}\diff x)=\mathcal{C}((-1)^{l+m+b-1}
a_1^{m+b}x^{p-1}y^p\diff x)+\dots\neq 0  
$.
Then  ${\rm rank}(\mathcal{C})\geq 2$.\\
$(2)$ For $p=5$ and $d=p+1=6$, to get ${\rm rank}(\mathcal{C})=1$ we must have $i_0\geq 2$, otherwise $X$ is superspecial by Theorem \ref{AS CURVE zero}. Then
$
x^{b-1}y^{m+1}\diff x, x^{b}y^{m}\diff x\in B
$
for $b\geq 1$ and $y^{m+1}\diff x, y^{m}\diff x\in B$ for $b=0$ (similar to the case $p=7$).
This implies ${\rm rank}(\mathcal{C})\geq 2$.
As for $d=3$, if $a_{d-2}=a_1=0$, then it is superspecial by Theorem \ref{AS CURVE zero}. If $a_1 \neq 0$, then $y^2\diff x \in B$ and ${\rm rank}(\mathcal{C})= 1$. 

For the supersingularity, let $X$ be a curve given by equation $y^5-y=x^3+a_1x$ with $a_1\neq 0$. Then we have $H^0(X,\Omega_X^1)=\langle\diff x,x\diff x,y\diff x,y^2\diff x\rangle$ and $\mathcal{C}(H^0(X,\Omega_X^1))=\langle\, \diff x\,\rangle$. 
Moreover by using Proposition \ref{prop de Rham basis of y^p-y=h(x)}, one can compute that  $X$ has  Ekedahl-Oort type $[4,3,2]$   and the curve $X$ is supersingular by \cite[Step 2, page 1379]{10.2307/23030376}.  For the definition of Ekedahl-Oort type we refer \cite{Ekedahl2009}.
\\
$(3)$ For $p=3$, if $d=2$ the curve is superspecial. If ${d=4}$, then we may assume that $a_2\neq 0$ in $f(x)$, otherwise by a simple change of coordinates we may assume the curve is  given by equation $y^3-y=a_4x^4$, which is superspecial by the Theorem \ref{AS CURVE zero}. 

If $a_2\neq 0$, then by a change of coordinate we get $f(x)=x^4+a_2x^2$. A basis of $H^0(X,\Omega_X^1)$ is $\{\diff x, x\diff x,y\diff x\}$ with $\mathcal{C}(\diff x)=\mathcal{C}(x\diff x)=0$ and
$
\mathcal{C}(y\diff x)=\mathcal{C}(-a_2x^2\diff x)=-a_2^{1/3}\diff x\neq 0\, .
$
This implies ${\rm rank}(\mathcal{C})=1$.
Similarly using Proposition \ref{prop de Rham basis of y^p-y=h(x)}, a curve given by equation $y^3-y=x^4+a_2x^2$ with $a_2\neq 0$ has Ekedahl-Oort type $[3,2]$ and hence is supersingular by \cite[Step 2, page 1379]{10.2307/23030376}. 

\subsection{Proof of Proposition \ref{prop a=g-2}}

Let $X$ be an Artin-Schreier curve  given by equation $y^p-y=f(x)$ with $\deg f(x)=d|p+1$ and ${\rm rank}(\mathcal{C})=2$. We now give the proof of Proposition \ref{prop a=g-2} by showing  ${\rm rank}(\mathcal{C})=2$. 
We may assume that the polynomial $f(x)$ has the form:
\begin{align*}
f(x)=x^d+a_{d-2}x^{d-2}+\cdots+a_1x\, .
\end{align*} 
By the proof of Theorem \ref{AS THEOREM g-1}, there is an integer $n\in\{1,2,\dots,d-2\}$ such that $a_{n}\neq 0$.  Again denote by $i_0$ the largest integer in $\{1,2,\dots,d-2\}$ such that $a_{i_0}\neq 0$ and let $l,m,b$ be the same as in the proof of Theorem \ref{AS THEOREM g-1}.

For $p\geq 7$, if $d=p+1$, we show that ${\rm rank}(\mathcal{C})\geq 3$. Indeed by Theorem \ref{AS CURVE zero}, we have $i_0\geq 2$ and $d-2=p-1=mi_0+b$. If $b=0$, then $d-2=p-1=mi_0$ and $m\leq (p-1)/2$. Moreover from the proof of Theorem \ref{AS THEOREM g-1}, part $(1)$, we have $y^m\diff x, y^{m+1}\diff x\in B$. We show that $y^{m+2}\diff x\in B$. It suffices to show that $(m+2)d\leq (p-1)(d-1)-2$, which is equivalent to showing $(m+2)(p+1)\leq p^2-p-2$ for any $1\leq m\leq (p-1)/2$. This is true for $p\geq 7$. On the other hand, note that $\mathcal{C}(y^m\diff x),\mathcal{C}(y^{m+1}\diff x)$ and $\mathcal{C}(y^{m+2}\diff x)$ are linearly independent. Then ${\rm rank}(\mathcal{C})\geq 3$ in this case. Now if $b\geq 1$, we showed that $x^by^m\diff x,x^{b-1}y^{m+1}\diff x\in B$. By a similar argument as in the case $b=0$ above, one can show that $x^by^{m+1}\diff x \in B$. Additionally, $\mathcal{C}(x^by^m\diff x),\mathcal{C}(x^{b-1}y^{m+1}\diff x)$ and $\mathcal{C}(x^by^{m+1}\diff x)$ are linearly independent. Then we have ${\rm rank}(\mathcal{C})\geq 3$ for $p\geq 7$ and $d=p+1$.

Now if $d|p+1$ and $d<p+1$, then $l=(p+1)/d \geq 2$. If $i_0\geq 2$,  we show that ${\rm rank}(\mathcal{C})\geq 3$ for $p\geq 7$. Note that we have $x^by^{l+m-1}\diff x,x^by^{l+m}\diff x \in B$ by the part (1) of the proof of Theorem \ref{AS THEOREM g-1}. We now claim that $x^by^{l+m+1}\diff x \in B$. By definition of $B$, it suffices to show 
$$
(l+m+1)d+bp\leq (p-1)(d-1)-2\, .
$$
By substituting $b=d-2-mi_0$ and $p=ld-1$, the inequality can be simplified to $(i_0l-1)m\geq 3$. This is true as $i_0\geq 2,l\geq 2$ and $m\geq 1$. For $i_0=1$, we show that ${\rm rank}(\mathcal{C})\geq 3$ for $p\geq 11$. Note that in this case we have $d-2=m$. One can easily show that $y^{l+m}\diff x,y^{l+m-1}\diff x \in B$ by the definition of $B$. Additionally, we show that $y^{l+m+1}\diff x\in B$ for $p\geq 11$. Indeed, it suffices to show $(l+m+1)d\leq (p-1)(d-1)-2$, which can be simplified to $2l+d\leq p$. Note that $ld=p+1$, we only need to show $2(p+1)/d+d\leq p$ which can be rewritten as $d^2-dp+2(p+1)\leq 0$. This is true for $3\leq d \leq (p+1)/2$. 

For $p=7$ and $i_0=1$, we have $d=4$ and the curve is given by equation $y^7-y=x^4+a_1x$ with $a_1\in k^*$. Then 
$$B=\{x^iy^j\diff x,|i,j\in \mathbb{Z}_{\geq 0},7i+4j\leq 16\}$$
and $\mathcal{C}(x^iy^j\diff x)=0$ for all $i,j$ except $(i,j)=(0,4),(0,3),(1,2)$. Moreover, $\mathcal{C}(y^4\diff x)$ and  $\mathcal{C}(y^3\diff x)$ are linearly independent and $\mathcal{C}(y^3\diff x)=\xi \mathcal{C}(xy^2\diff x)$ for some $\xi\in k^*$. Then ${\rm rank}(\mathcal{C})=2$. Using Proposition \ref{prop de Rham basis of y^p-y=h(x)}  and by  \cite[Step 2, page 1379]{10.2307/23030376} as above,  the curve is supersingular.

Now let $p=5$. If $d=3$, then by Theorem \ref{AS CURVE zero} and Theorem \ref{AS THEOREM g-1}, we have $a(X)=g$ or $g-1$. For $d=6$, we get ${d-2=4=mi_0+b}$. Additionally for $i_0=2,3,4$, one can easily show that $y^2\diff x,y^3\diff x,xy^3\diff x \in B$ and $\mathcal{C}(y^2\diff x),\mathcal{C}(y^3\diff x)$ and $\mathcal{C}(xy^2\diff x)$ are linearly independent. Hence ${\rm rank}(\mathcal{C})\geq 3$ and $a(X)\leq g-3$ with $g=10$.
%Note that for $i_0=4$, the curve $X$ is given by equation $y^5-y=x^6+a_4x^4$ with $a_4\in k^*$. Then $\mathcal{C}(H^0(X,\Omega_X^1))=\langle 1\diff x, y\diff x\rangle$ and $\mathcal{C}(\langle 1\diff x, y\diff x\rangle)=\langle 1\diff x\rangle$. Take $a_4=1$ in this case. Then  the curve $X$ is not supersingular. One can check it  by computing the Zeta function of $X$ over Magma \cite{BOSMA1997235}.

For $p=3$ and $d\, |\, p+1=4$, by Theorem \ref{AS CURVE zero} and Theorem \ref{AS THEOREM g-1}, we have $a(X)\geq g-1$.

\medskip

Similar to  Proposition \ref{prop 2.2 a=g-1}, we have the following.
\begin{prop}\label{prop  2.2 a=g-2}
Let $k$ be an algebraically closed field with ${\rm char}(k)=p\geq 3$. Let X be an Artin-Schreier curve of genus $g\geq 1$ with equation $y^p-y=f(x)$, where $f(x) \in k(x)$. If $a(X)=g-2$, then \\
$(1)$ if $p=3$, then $g\leq 7$ and the curve $X$ can be either written as 
$
y^3-y=f(x)
$,
where $f(x)\in k[x]$ and $\deg f(x)\leq 8$, or  as $y^3-y=f_0(x)+f_1(1/x)$ with $f_0(x),f_1(x)\in k[x]$ and $\deg f_0(x)\leq 4,\deg f_1(x)\leq 2$;\\
$(2)$ if $p\geq 5$, then $g\leq (2p+1)(p-1)/2$ and  $X$ is isomorphic to a curve with equation 
\begin{align*}
y^p-y=f(x),\, f(x)\in k[x]\, .
\end{align*} 
\end{prop}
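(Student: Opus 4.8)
The plan is to follow the proof of Proposition~\ref{prop 2.2 a=g-1}: first use the $p$-rank to pin down the pole structure of $f$, then convert the hypothesis $a(X)=g-2$, i.e. ${\rm rank}(\mathcal{C})=g-a(X)=2$, into upper bounds on the pole orders and hence on $g$. Since $a(X)+f_X\le g$ and $a(X)=g-2$ we get $f_X\le 2$. Writing $f=f_0(x)+\sum_{i=1}^{m}f_i(1/(x-\xi_i))$ as in $(\ref{f(x) AS general p})$, the cover has $m+1$ branch points, so the Deuring--Shafarevich formula gives $f_X=m(p-1)\le 2$. For $p\ge 5$ this forces $m=0$, so $f\in k[x]$; for $p=3$ it gives $m\le 1$, i.e. at most one finite pole. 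After a Möbius change of coordinates placing the finite pole (if any) at $0$ and the usual Artin--Schreier normalization (substituting $y\mapsto y+g(x)$ to make pole orders prime to $p$ and to clear removable lower-order terms), we reach the two normal forms in the statement.

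For $p\ge 5$ I would bound $d=\deg f\le 2p+2$. With $B=\{x^iy^j\diff x\mid ip+jd\le(p-1)(d-1)-2\}$ and the identity $\mathcal{C}(x^iy^j\diff x)=\sum_{t=0}^{j}(-1)^t\binom{j}{t}y^{j-t}\mathcal{C}(x^if^t\diff x)$ coming from $y=y^p-f$, consider $\omega_j=x^{p-1}y^j\diff x$ for $j=0,1,2$. Because $\mathcal{C}(x^{p-1}\diff x)=\diff x$, the $t=0$ term shows the top $y$-degree part of $\mathcal{C}(\omega_j)$ is $y^j\diff x$, so $\mathcal{C}(\omega_0),\mathcal{C}(\omega_1),\mathcal{C}(\omega_2)$ have pairwise distinct $y$-degrees and are $k$-linearly independent. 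The binding membership condition $\omega_2\in B$ reads $d\ge\frac{p^2+1}{p-3}$; since $\frac{p^2+1}{p-3}\le 2p+3$ for $p\ge 5$ (equivalently $(p-5)(p+2)\ge 0$), any $d\ge 2p+3$ places all three $\omega_j$ in $B$ and forces ${\rm rank}(\mathcal{C})\ge 3$, a contradiction. Hence $d\le 2p+2$ and $g=\frac{(p-1)(d-1)}{2}\le\frac{(2p+1)(p-1)}{2}$.

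For $p=3$ I would treat the two pole structures separately. In the polynomial subcase ($\deg f=d$, $3\nmid d$, $g=d-1$) the basis condition forces $j\le 1$, so the three images cannot be separated by $y$-degree alone; instead, for $d\ge 10$ the forms $x^{2}\diff x,\ x^{5}\diff x,\ x^{2}y\diff x$ all lie in $B$ with images $\diff x,\ x\diff x,\ y\diff x+(\text{a $y^0$-part})$, which are independent, so ${\rm rank}(\mathcal{C})\ge 3$. Thus $d\le 9$, and as $3\nmid d$ in fact $d\le 8$, giving $g\le 7$. In the one-finite-pole subcase I would use the two-block basis $B_0\cup B_1$: the finite pole always yields $\mathcal{C}(x_1\diff x)\ne 0$ (as in the proof of Proposition~\ref{prop 2.2 a=g-1}), and one shows that $d_0\ge 5$ produces an extra independent image supported at $\infty$ while $d_1\ge 4$ produces one supported at $0$; either way ${\rm rank}(\mathcal{C})\ge 3$. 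Hence $d_0\le 4$ and $d_1\le 2$, and by Riemann--Hurwitz $g=d_0+d_1\le 6\le 7$.

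The main obstacle is the $p=3$ one-finite-pole case. There the Cartier operator couples the local data at $0$ and at $\infty$ through the basis $B_0\cup B_1$, so deciding which forms lie in the basis and extracting a third independent image in all configurations (tracking orders at $0$ and $\infty$, as in the Remark after Proposition~\ref{prop de Rham basis of y^p-y=h(x)}) is genuinely delicate; one must also check that the normalizations are compatible with the residual scaling $x\mapsto\lambda x$ fixing $\{0,\infty\}$. By contrast, once the three $y$-graded differentials $\omega_0,\omega_1,\omega_2$ are in hand, the bound for $p\ge 5$ is immediate.
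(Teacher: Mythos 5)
Your reduction via Deuring--Shafarevich ($f_X\le g-a(X)=2$, hence $m=0$ poles besides $\infty$ for $p\ge 5$ and $m\le 1$ for $p=3$) is exactly the route the paper indicates, and your $p\ge 5$ argument is correct: for $d\ge 2p+3$ the three forms $x^{p-1}y^j\diff x$, $j=0,1,2$, lie in $B$ and their Cartier images have distinct top $y$-degrees, so ${\rm rank}(\mathcal{C})\ge 3$, giving $d\le 2p+2$ and $g\le (2p+1)(p-1)/2$. The $p=3$ polynomial subcase ($d\ge 10$ forces ${\rm rank}(\mathcal{C})\ge 3$ via $x^2\diff x$, $x^5\diff x$, $x^2y\diff x$) is also correct.

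The gap is in the $p=3$ one-finite-pole case, and it is not merely the ``delicacy'' you flag: the intermediate claim you reduce to is false. You assert that $d_1\ge 4$ forces ${\rm rank}(\mathcal{C})\ge 3$ and conclude $d_0\le 4$, $d_1\le 2$ for your fixed presentation. Counterexample: $y^3-y=x+1/x^4$, with $d_0=1$, $d_1=4$, $g=5$. Writing $x_1=1/x$, the basis of $H^0(X,\Omega^1_X)$ is $x_1\diff x$, $x_1^2\diff x$, $x_1^3\diff x$, $x_1y\diff x$, $x_1^2y\diff x$, and a direct computation gives $\mathcal{C}(x_1\diff x)=x_1\diff x$, $\mathcal{C}(x_1^2\diff x)=\mathcal{C}(x_1^3\diff x)=0$, $\mathcal{C}(x_1y\diff x)=x_1y\diff x$, $\mathcal{C}(x_1^2y\diff x)=-x_1\diff x$; so ${\rm rank}(\mathcal{C})=2$ and $a(X)=g-2$, yet $d_1=4$. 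The proposition is nevertheless true for this curve because its normal form is reached only after the M\"obius swap $x\mapsto 1/x$, which turns it into $y^3-y=x^4+1/x$ (i.e.\ $\deg f_0=4$, $\deg f_1=1$); the asymmetry of the bounds $\deg f_0\le 4$, $\deg f_1\le 2$ exists precisely because one may place the higher-order pole at $\infty$. Your argument never invokes this swap (you only mention the residual scaling $x\mapsto\lambda x$), so it attempts to prove a stronger statement that is false. The repair is short: assume $d_0\ge d_1$ without loss of generality via the swap; then (i) if $d_0\ge 5$, the forms $x^2\diff x\in B_0$ and $x_1\diff x,\,x_1y\diff x\in B_1$ have images $\diff x$, $x_1\diff x$, and $x_1y\diff x+\cdots$ (nonzero $y$-part), which are independent, so ${\rm rank}(\mathcal{C})\ge 3$; and (ii) if $(d_0,d_1)=(4,4)$, then $y\diff x\in B_0$ and $\mathcal{C}(y\diff x)$ has a nonzero $x_1^2\diff x$-component coming from the leading term of $f_1$, which together with $x_1\diff x$ and $x_1y\diff x$ again gives ${\rm rank}(\mathcal{C})\ge 3$. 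This yields $d_0\le 4$, $d_1\le 2$ after the swap, and $g=d_0+d_1\le 6$, completing part (1).
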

The proof of part $(1)$ is similar to the part $(1)$ of the proof of Proposition \ref{prop 2.2 a=g-1} and hence we omit it. One can prove part $(2)$  using the the Deuring-Shafarevich formula.

\section{On the existence of trigonal curves with prescribed $a$-number }

Now we study the existence of trigonal  curves with prescribed $a$-number and give proofs of Theorem \ref{thm p=2} and \ref{thm p=3}. We deal here with genus $5$. 
 It is well known that  a trigonal curve $X$ of genus $5$  is a normalization of a quintic curve $C$ in $\mathbb{P}^2$ with an unique singular point   \cite[Exercise I-6, page 279]{MR770932}, see also \cite[Lemma 2.2.1]{2018arXiv180411277K}.

\iffalse
In \cite{rre} Re also gave a bound for the $a$-number of a hyperelliptic curve of genus $g$ defined over $k$: 
$$
a<\frac{(p-1)g}{p}+\frac{p+1}{2p}\, .
$$
In the following theorem we give a strengthening of this result.
\begin{thm}\label{thm a-number  general g geq p and p>2}
A smooth projective hyperelliptic curve 
over  $k$ in characteristic $p\geq 3$ of genus $g\geq p$
has $a$-number $$a\leq \lceil (p-1)g/p-(3p-1)/2p \rceil\, .$$
\end{thm}
A related result was given by Elkin \cite[Corollary 1.2]{ELKIN20111}; he showed that the $a$-number is bounded by $a\leq g(p-2)/p+1$.
In general we have $$\lceil (p-1)g/p-(3p-1)/2p \rceil <{(p-1)g}/{p}+{(p+1)}/{2p}-1\, .$$
For example in the case $p=g=3$, Re's bound shows that $a<2+2/3$ and Elkin's bound shows that $a\leq 2$,
 while our bound shows that $a\leq 1$, which is sharp. Note also that for $p=g=3$, Frei \cite{SF} ruled out the case $a=2$.
\fi

\subsection{Set up}
For a trigonal curve $X$ of genus $5$ defined over $k$, let $\phi: X\to \mathbb{P}^1$ be a morphism of degree $3$. Then using the base point free pencil trick and Clifford Theorem one can easily show that  $\phi$ is unique (up to  isomorphism of $\mathbb{P}^1$) and $X$ is not hyperelliptic. 
\begin{lem}\label{lem 2.2.1}
Let $p$ be  either $2$ or $3$. If $X$ is a trigonal curve of genus $5$ over $k$, then \\
$(1)$ $X$ is a normalization of  a quintic curve $C$ in $\mathbb{P}^2$ with  an unique singular point of multiplicity $2$. Moreover,

$(i)$ If $C$ has a node, then $C$ is given by a homogeneous polynomial $F\in k[x,y,z]$ of degree $5$ with 
\begin{align*}
F=xyz^3+f\, ,
\end{align*}
where $f$ is a sum of monomials  not divisible by $z^3$.

$(ii)$ If $C$ has a cusp, then $C$ is given by a homogeneous polynomial $F\in k[x,y,z]$ of degree $5$ with 
\begin{align*}
F=x^2z^3+f\, ,
\end{align*}
where $f$ is a sum of monomials not divisible by $z^3$ and the coefficient of $y^3z^2$ in $f$ is non-zero.\\
$(2)$ The normalization of any  $C$ with one singular point in $(i)$ and $(ii)$  is a trigonal curve of genus $5$.
\end{lem}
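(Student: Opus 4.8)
The plan is to construct the plane quintic model from the canonical and trigonal linear systems, read off the normal forms by a projective change of coordinates, and then run the construction in reverse for the converse in part $(2)$.

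For part $(1)$, let $D$ be the $g^1_3$ defining the (unique) trigonal structure and consider the residual system $|K_X-D|$. Since $\deg K_X=8$ and, by Riemann--Roch, $h^0(K_X-D)=h^0(D)-\deg D+(g-1)=2-3+4=3$, this is a $g^2_5$. I would first show it is base-point free: a base point $P$ would force $|K_X-D-P|$ to be a $g^2_4$, and since this special class is neither $0$ nor $K_X$, Clifford's inequality $\dim\le\deg/2$ would be an equality, making $X$ hyperelliptic and contradicting that a genus-$5$ trigonal curve is non-hyperelliptic. The associated morphism $\psi:X\to\mathbb{P}^2$ is then birational onto its image: its degree divides $5$, and degree $5$ is impossible because the image spans $\mathbb{P}^2$ and so is not a line. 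Hence $C=\psi(X)$ is a plane quintic with normalization $X$. As $p_a(C)=\binom{4}{2}=6$ while $p_g=g(X)=5$, the sum of the $\delta$-invariants of $C$ equals $1$, so $C$ has a unique singular point $P$ with $\delta_P=1$; thus $P$ is a double point, i.e. a node or a cusp.

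To obtain the explicit shapes I would use $\mathrm{PGL}_3(k)$ to move $P$ to $[0:0:1]$. In the chart $z=1$, the conditions that $P$ lies on $C$ and is singular kill the monomials $z^5$ and $xz^4,yz^4$, so $F$ carries no term of $z$-degree $4$ or $5$; the degree-two part in $(x,y)$ is the tangent cone and appears as $z^3q(x,y)$. In the node case $q$ has two distinct roots, which over $k$ I can move to the coordinate lines and scale to $q=xy$, giving $F=xyz^3+f$ with $f$ not divisible by $z^3$. In the cusp case $q$ is a double line, normalized to $q=x^2$, giving $F=x^2z^3+f$. Finally the requirement $\delta_P=1$ forces the cusp to be ordinary ($A_2$): the coefficient of $y^3z^2$ must be nonzero, since otherwise the tangent cone divides the cubic term and $\delta_P\ge 2$, contradicting $p_g=5$. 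For the converse part $(2)$, given $C$ of the stated shape with a single singular point, I would read off from the normal form that $P=[0:0:1]$ is a node (case $(i)$), respectively an ordinary cusp (case $(ii)$, using the nonvanishing of the $y^3z^2$-coefficient), so $\delta_P=1$ and the normalization has genus $p_a(C)-1=5$. Trigonality follows by projecting $C$ away from $P$: a general line through $P$ meets $C$ with multiplicity $2$ at $P$ and in $5-2=3$ further points, so the pencil of lines through $P$ pulls back to a base-point-free $g^1_3$, exhibiting a degree-$3$ map $\phi:X\to\mathbb{P}^1$.

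I expect the genuine difficulty to lie in the singularity analysis in characteristics $2$ and $3$, where the standard normal-form reductions are least robust: completing the square to isolate the tangent cone of a cusp fails in characteristic $2$, and the $\delta$-invariant of an $A_2$ cusp must be verified directly (via the conductor or an explicit local parametrization) rather than imported from the characteristic-zero classification. Pinning down that $\delta_P=1$ is exactly equivalent to the nonvanishing of the $y^3z^2$-coefficient in these two characteristics, and checking in part $(2)$ that the single-singular-point hypothesis suffices, is where the real care is needed; the linear-system bookkeeping in part $(1)$ and the projection argument in part $(2)$ are then routine.
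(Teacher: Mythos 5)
Your proposal is correct, and it is genuinely more self-contained than the paper's own treatment, which is largely by citation: the paper invokes Kudo--Harashita for $p\neq 2$, and for $p=2$ writes out only the tangent-cone step of part $(1)$ (placing the singular point at $(0:0:1)$ gives $F=Qz^3+f$; nondegenerate $Q$ can be written as $xy$, degenerate $Q$ --- which in characteristic $2$ means the $xy$-coefficient vanishes, so $Q$ is the square of a linear form --- can be written as $x^2$), while part $(2)$ is omitted as ``similar to the case $p\geq 3$''. You instead rebuild the whole chain: $|K_X-D|$ is a base-point-free $g^2_5$ (Riemann--Roch, then Clifford plus non-hyperellipticity to kill base points), it maps $X$ birationally onto a quintic $C$, the count $p_a(C)-g=6-5=1$ forces a unique double point with $\delta_P=1$, the normal forms follow by normalizing the tangent cone, and a blow-up computation identifies $\delta_P=1$ for the unibranch case with the nonvanishing of the $y^3z^2$-coefficient; part $(2)$ then follows from the genus formula and projection from $P$. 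This is in substance the argument the paper imports by reference, but your version has the merit that every step is visibly characteristic-free, which matters precisely at $p=2$, the case the citation does not cover; in particular your blow-up criterion supplies the $y^3z^2$-condition in case $(ii)$, a detail the paper's $p=2$ sketch never addresses. Two small repairs are needed. First, when the $y^3z^2$-coefficient vanishes it is the reduced tangent line $x$, not the tangent cone $x^2$, that divides the cubic term; your blow-up mechanism still gives the desired conclusion $\delta_P\geq 2$, because the strict transform is then singular at the unique point over $P$. Second, in part $(2)$ you should record that the hypotheses force $C$ to be irreducible: if $C=C_1\cup C_2$, B\'ezout concentrates intersection multiplicity at least $4$ of $C_1$ with $C_2$ at the unique singular point, which is incompatible with a point of multiplicity $2$ and $\delta_P=1$; without this, ``the normalization is a trigonal curve of genus $5$'' does not parse, since curves in this paper are irreducible.
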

\begin{proof}
Kudo and Harashita proved the lemma for $p\neq 2$ in \cite[Lemma 2.2.1]{2018arXiv180411277K}. For $p=2$, we show that the part $(1)$ is true and since the proof of the other part is similar to the case $p\geq 3$  we omit it. 

Assuming the singular point  is $(0:0:1)$, the curve $C$ is given by 
$
F=Qz^3+f,
$
where $Q$ is a quadratic form in $k[x, y]$ and $f$ is a sum of monomials  in $x,y$ of  degree $>2$.  

If $Q$ is non-degenerate, then $C$ has a node. Writing the form $Q$ as   $xy$,  we arrive at $F=xyz^3+f$ with $f$ a sum of monomials in $x,y$ of degree $>2$. 

If $Q$ is degenerate, then $C$ has a cusp. Writing the form $Q$  as $x^2$, we arrive at  $F=x^2z^3+f$ with $f$ a sum of monomials in $x,y$ of degree $>2$. 
\end{proof}

We recall the following proposition.
\begin{prop}\cite[\text{Proposition~$2.3.1$}]{2018arXiv180411277K}\label{prop 2.3.1}
Let $X$ be a trigonal curve of genus $5$ defined over $k$.
Let $C$ be an associated quintic curve in $\mathbb{P}^2$ given by Lemma \ref{lem 2.2.1}. Let $h_{l,m} \,(1 \leq  l, m \leq  5)$
be the coefficient of the monomial
$x^{pi_{l}-i_{m}}y^{pj_{l}-j_{m}}z^{pk_{l}-k_{m}}$
in $F^{p-1}$, where 
%$(i_1, j_1, k_1) = (3, 1, 1), (i_2, j_2, k_2) = (1, 3, 1), (i_3, j_3, k_3) = (2, 2, 1), (i_4, j_4, k_4) =(2, 1, 2)$ and $(i_5, j_5, k_5) = (1, 2, 2)$. 
$$
\begin{tabular}{cccccc}
$l$              & $1$ & $2$ & $3$ & $4$ & $5$  \\
\hline
$i_l$             & $3$ & $1$ & $2$ & $2$ & $1$ \\
$j_l$            & $1$ & $3$ & $2$ & $1$ & $2$  \\
$k_l$          & $1$ & $1$ & $1$ & $2$ & $2$  \\
\end{tabular}\, .
$$
Then the Hasse-Witt matrix $H$ of $X$ is given by $H=(h_{l,m})$.
\end{prop}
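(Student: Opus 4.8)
The plan is to compute the Hasse–Witt matrix $H$ of $X$ via its action on $H^1(X,\mathcal{O}_X)$, exploiting the fact that $X$ is the normalization of the quintic $C$ of Lemma \ref{lem 2.2.1}, and that a smooth plane quintic has genus $5$ with a canonically identified cohomology. First I would observe that since $X$ has genus $5$, both $H^0(X,\Omega^1_X)$ and $H^1(X,\mathcal{O}_X)$ are $5$-dimensional, and that for a plane curve of degree $d$ the space $H^0(X,\Omega^1_X)$ is spanned by the adjoint differentials $x^{i}y^{j}z^{k}\,\Omega/F_z$ with $i+j+k=d-3=2$. The five monomials of degree $2$ in $x,y,z$ give exactly the five listed exponent vectors $(i_l,j_l,k_l)$; these index a basis, and by Serre duality $H^1(X,\mathcal{O}_X)$ is dually indexed by the same five monomials. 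The singular point introduces an adjoint (conductor) condition, but since the singularity is a single node or cusp and $C$ is a quintic of arithmetic genus $6$, exactly one adjoint condition is imposed, cutting the geometric genus down to $5$; I would check that the five chosen monomials remain a basis after imposing this one condition, which is where the specific form of $F$ in Lemma \ref{lem 2.2.1} (the $z^3$-divisibility normalization) is used.

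Next I would recall the standard description of the Cartier–Manin / Hasse–Witt operator for plane curves: the Frobenius action on $H^1(X,\mathcal{O}_X)$, expressed on the monomial basis, is given by extracting the coefficient of the appropriate monomial in $F^{p-1}$. Concretely, the Cartier operator sends a Čech representative built from $x^{a}y^{b}z^{c}$ to a combination of such representatives, and the matrix coefficient $h_{l,m}$ is precisely the coefficient, in the expansion of $F^{p-1}$, of the monomial whose exponent in each variable is $p$ times the $l$-th exponent minus the $m$-th exponent — i.e.\ $x^{pi_l-i_m}y^{pj_l-j_m}z^{pk_l-k_m}$. This is the direct analogue, for plane curves, of the Artin–Schreier computation carried out earlier in the paper via \cite[Lemma 1]{MR0393035}; the combinatorics of ``multiply the target index by $p$ and subtract the source index'' is forced by the semilinearity of Frobenius together with the relation $x^{p}=(x^{p})$ on cohomology classes.

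The cleanest route is to cite the general formula for the Hasse–Witt matrix of a smooth plane curve due to \cite{MR0393035} (or its reformulation for normalizations of singular plane curves), and then verify that the restriction to our quintic, with the basis given by the degree-$2$ monomials above, yields exactly the matrix $(h_{l,m})$. I would therefore structure the proof as: (i) fix the monomial basis of $H^0(X,\Omega^1_X)$ and its dual in $H^1(X,\mathcal{O}_X)$, confirming the single adjoint condition does not disturb these five elements; (ii) invoke the plane-curve Cartier-operator formula to write the matrix entries as coefficients in $F^{p-1}$; (iii) match indices to obtain the stated table.

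The main obstacle is step (i): justifying that, despite the singularity of $C$, the normalization $X$ inherits a cohomology basis indexed by exactly these five degree-$2$ monomials, and that the Hasse–Witt formula valid for \emph{smooth} plane curves transfers to the normalization of the singular quintic without extra correction terms from the node or cusp. I expect this to require either a careful local analysis at the singular point (checking the conductor/adjoint ideal contributes no monomial of degree $\le 2$ beyond the single expected condition) or an appeal to a version of the formula already established for normalizations of plane curves with one double point. Since the statement is quoted directly from \cite{2018arXiv180411277K}, the honest and most efficient approach is to reduce to their computation, verifying only that our normalized equations in Lemma \ref{lem 2.2.1} satisfy the hypotheses under which their formula was derived.
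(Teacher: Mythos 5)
The first thing to note is that the paper does not prove Proposition \ref{prop 2.3.1} at all: it is recalled verbatim from \cite{2018arXiv180411277K}, and the only ``proof'' in the paper is that citation. So your closing recommendation --- verify that the normal forms of Lemma \ref{lem 2.2.1} satisfy Kudo--Harashita's hypotheses and reduce to their computation --- coincides exactly with what the paper does, and as a citation-reduction your proposal is fine.

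As an independent argument, however, your sketch has concrete defects. First, the indexing is off: there are \emph{six} monomials of degree $2$ in $x,y,z$, not five, and the table's exponent vectors $(i_l,j_l,k_l)$ have degree $5$, not $2$. The natural home of the coefficient formula is $H^1(X,\mathcal{O}_X)$ rather than $H^0(X,\Omega^1_X)$: from $0\to\mathcal{O}_{\mathbb{P}^2}(-5)\to\mathcal{O}_{\mathbb{P}^2}\to\mathcal{O}_C\to 0$ one gets $H^1(C,\mathcal{O}_C)\cong H^2(\mathbb{P}^2,\mathcal{O}(-5))$, whose standard basis is the six classes $x^{-i}y^{-j}z^{-k}$ with $i,j,k\geq 1$, $i+j+k=5$; semilinearity of Frobenius and multiplication by $F^{p-1}$ then force the rule ``coefficient of $x^{pi_l-i_m}y^{pj_l-j_m}z^{pk_l-k_m}$ in $F^{p-1}$'' on this \emph{six}-dimensional space. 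The table keeps five of the six triples, omitting $(1,1,3)$, i.e.\ (after the shift by $(1,1,1)$) omitting $z^2$, the one degree-$2$ monomial not vanishing at the singular point $(0:0:1)$. Second, attributing the plane-curve Hasse--Witt formula to \cite{MR0393035} is a misattribution: Sullivan's lemma gives bases of differentials for Artin--Schreier covers (which is how the paper uses it in Section 2) and says nothing about plane curves. Third --- and this is the actual content of the proposition --- the passage from $C$ to its normalization $X$ is not a formality. The sequence $0\to\mathcal{O}_C\to\pi_*\mathcal{O}_X\to\mathcal{S}\to 0$ yields a Frobenius-equivariant surjection $H^1(C,\mathcal{O}_C)\twoheadrightarrow H^1(X,\mathcal{O}_X)$ with one-dimensional kernel, and deleting the row and column of $(1,1,3)$ from the $6\times 6$ matrix computes the induced map on the quotient \emph{only if} that kernel is spanned precisely by the omitted class $x^{-1}y^{-1}z^{-3}$; proving this requires the specific shape $F=Qz^3+f$ from Lemma \ref{lem 2.2.1} and is exactly the step you defer to \cite{2018arXiv180411277K}. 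So taken as a standalone proof your proposal has a genuine gap; taken as a reduction to the cited source, it is the same as the paper's (non-)proof.
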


\subsection{The proof of Theorem $\ref{thm p=2}$}

Let $p=2$ and $X$ be a trigonal curve of genus $5$ defined over $k$. we start by simplifying the defining equation of the singular model $C\subset \mathbb{P}^2$ of $X$. 
\begin{lem}\label{prop 3.3.1}
Let $k$ be an algebraically closed field with ${{\rm char} (k)=2}$. In the notation of Lemma \ref{lem 2.2.1} case $(i)$, we can choose $f$ as
\begin{align}
f=&(x^3+b_1y^3)z^2+\sum_{i=1}^5(a_ix^{5-i}y^{i-1})z+\sum_{i=1}^{11}a_ix^{11-i}y^{i-6}
 \label{p=2 node  f b_0 neq 0}
\end{align}
or
\begin{align}
f=\sum_{i=1}^5(a_ix^{5-i}y^{i-1})z+\sum_{i=1}^{11}a_ix^{11-i}y^{i-6}\, .\label{p=2 node f b_0=b_1=0}
\end{align}
For case $(ii)$, we can choose $f$ as
\begin{align}
f=y^3z^2+\sum_{i=1}^5(a_ix^{5-i}y^{i-1})z
+\sum_{i=1}^{11}a_ix^{11-i}y^{i-6}\, . \label{p=2 cusp}
\end{align}
\end{lem}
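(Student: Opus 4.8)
The plan is to begin from the two normal forms furnished by Lemma~\ref{lem 2.2.1}, where $C$ is cut out by $F=xyz^3+f$ in case $(i)$ and by $F=x^2z^3+f$ in case $(ii)$, with $f$ a sum of monomials of $z$-degree at most $2$, and to use up the residual coordinate freedom. Writing $f=f_2z^2+f_1z+f_0$ with $f_j$ a binary form of degree $5-j$ in $x,y$, the two lower pieces $f_1$ (a general quartic) and $f_0$ (a general quintic) are unconstrained; under every substitution I will use, they merely absorb the lower-order contributions and reappear as the arbitrary coefficients $a_i$ in \eqref{p=2 node  f b_0 neq 0}--\eqref{p=2 cusp}. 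Hence the whole content of the lemma is the simplification of the binary cubic $f_2$, and I only need to track how $f_2$ and the leading term behave.

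The key point, special to characteristic $2$, is the effect of the translation $z\mapsto z+\ell$ with $\ell=\alpha x+\beta y$ linear. Since $(z+\ell)^3=z^3+z^2\ell+z\ell^2+\ell^3$ and $(z+\ell)^2=z^2+\ell^2$ in characteristic $2$, the leading term $xyz^3$ (resp.\ $x^2z^3$) contributes exactly $xy\,\ell$ (resp.\ $x^2\,\ell$) to the coefficient of $z^2$, while all the other contributions $xy(z\ell^2+\ell^3)$, $f_2\ell^2$ and $f_1\ell$ have $z$-degree at most $1$ and so only perturb $f_1$ and $f_0$. In particular no monomial divisible by $z^3$ is ever created and the leading term is preserved. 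Thus $z\mapsto z+\ell$ adds an arbitrary combination of $x^2y$ and $xy^2$ to $f_2$ in the node case, and an arbitrary combination of $x^3$ and $x^2y$ in the cusp case, removing two of the four monomials of $f_2$ in each case.

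In the node case this leaves $f_2=b_0x^3+b_3y^3$. The stabilizer of the configuration also contains the diagonal scalings $x\mapsto ax$, $y\mapsto dy$, $z\mapsto gz$, an overall rescaling of $F$, and the involution $x\leftrightarrow y$ preserving the tangent cone $xy$. If $f_2\neq0$ I would swap so that $b_0\neq0$, scale the coefficient of $x^3z^2$ to $1$, and leave that of $y^3z^2$ as the free parameter $b_1$, obtaining \eqref{p=2 node  f b_0 neq 0}; if $f_2=0$ there is nothing to normalize and we are in \eqref{p=2 node f b_0=b_1=0}. In the cusp case the stabilizer of the tangent cone $x^2$ is smaller: it allows $x\mapsto ax$, $y\mapsto cx+dy$ and $z\mapsto ex+fy+gz$, but no $y$-component in the image of $x$. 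Working modulo the subspace $\langle x^3,x^2y\rangle$ that $z$-translations can clear, $f_2\equiv y^2(c_2x+c_3y)$ with $c_3\neq0$ by Lemma~\ref{lem 2.2.1}$(ii)$; the substitution $y\mapsto y+(c_2/c_3)x$ (which fixes $x^2z^3$) then reduces this to $c_3y^3$. Clearing $\langle x^3,x^2y\rangle$ by a final $z$-translation and scaling $c_3$ to $1$ gives $f_2=y^3$, that is \eqref{p=2 cusp}.

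The step needing the most care is the order of the substitutions, especially in the cusp case: the shift $y\mapsto y+(c_2/c_3)x$ that clears $xy^2$ simultaneously reintroduces $x^3$ and $x^2y$ terms into $f_2$, so the clearing $z$-translation must be applied afterwards, and one must check that this last step alters neither the leading term $x^2z^3$ nor the coefficient of $y^3z^2$ --- which holds because $x^2\ell$ only feeds the $x^3$ and $x^2y$ slots. Apart from this bookkeeping the argument is a routine verification that every substitution preserves the shape $F=(\text{leading term})+f_2z^2+f_1z+f_0$ with $f_2$ a binary cubic and $f_1,f_0$ unconstrained, so that the final $f_1$ and $f_0$ can be written with arbitrary coefficients exactly as in the three displayed forms.
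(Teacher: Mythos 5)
Your proposal is correct and follows essentially the same route as the paper's proof: use the characteristic-$2$ expansion $(z+\ell)^3=z^3+z^2\ell+z\ell^2+\ell^3$ so that $z\mapsto z+\alpha x+\beta y$ adds $xy\ell$ (resp.\ $x^2\ell$) to the $z^2$-coefficient, clearing $x^2y,xy^2$ in the node case and $x^3,x^2y$ in the cusp case, combined with the shear $y\mapsto y+\gamma x$ in the cusp case and the swap/scalings in the node case. The only differences are presentational (you track the quotient modulo the clearable subspace and justify the ordering of substitutions more explicitly than the paper does).
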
 
\begin{proof}
For the case $(i)$ of Lemma \ref{lem 2.2.1}, the curve $C$ is given by $F=xyz^3+f$, where $f$ is the sum of monomials,  which have  degree $>2$ in $x,y$. By a linear transformation $z\mapsto z+\alpha x+\beta y$, we may assume the coefficients of $x^2yz^2$ and $xy^2z^2$ are zero.
Then 
\begin{align*}
f=(b_0x^3+b_1y^3)z^2+\sum_{i=1}^5 a_ix^{5-i}y^{i-1}z+\sum_{i=1}^{11}a_ix^{11-i}y^{i-6}\, ,
\end{align*}
where $b_0,b_1,a_1,\dots,a_{11}\in k$.
 Note that if $(b_0,b_1)\neq (0,0)$, by symmetry we may assume $b_0\neq 0$. By scaling $x\mapsto \alpha x, y \mapsto \beta y$ with $\alpha\beta=1$ and $\alpha^3=1$. Then we may assume  $b_0=1$. On the other hand, if $b_0=b_1=0$ in $f$, then  we have
 \begin{align*}
f=\sum_{i=1}^5a_ix^{5-i}y^{i-1}z+\sum_{i=1}^{11}a_ix^{11-i}y^{i-6}\, .
\end{align*}
For the case $(ii)$ of Lemma \ref{lem 2.2.1}, the curve $C$ is given by $F=x^2z^3+f$, where $f$ is the sum of monomials,  which have degree $>2$ in $x,y$ and the coefficient of $y^3z^2$ is non-zero. Consider $y \mapsto y+\gamma x$ and then consider  $z\mapsto z+\alpha x+\beta y$, we may assume the coefficients of $x^3z^2,x^2yz^2$ and $xy^2z^2$ are zero. Moreover, by  scaling $y\mapsto \delta y$  with $\delta^3=1$, we may assume the coefficient of $y^3z^2$ is equal to 1. Then we have
\begin{align*}
f=y^3z^2+\sum_{i=1}^5a_ix^{5-i}y^{i-1}z+\sum_{i=1}^{11}a_ix^{11-i}y^{i-6}\, ,~ a_1,\dots,a_{11}\in k\, .
\end{align*}

\end{proof}
Now we can give a proof of Theorem \ref{thm p=2}.
\begin{proof}[Proof of Theorem $\ref{thm p=2}$]
Let $C$ be a singular model of $X$ given by Lemma \ref{lem 2.2.1}. If  $C$ has a node, then by Lemma \ref{prop 3.3.1}, $f$ is either given by equation $(\ref{p=2 node  f b_0 neq 0})$ or equation~${(\ref{p=2 node f b_0=b_1=0})}$. If $f$ is given by equation $(\ref{p=2 node  f b_0 neq 0})$, then by Proposition \ref{prop 2.3.1}, the Hasse-Witt matrix $H$ of $X$ is equal to
\begin{align}
\left ( \begin{array}{ccccc}
a_2 & 0 & a_1 & a_7 & a_6 \\ 
0 & a_4 & a_5 & a_{11} & a_{10} \\
a_4 & a_2 & a_3 & a_9 &a_8 \\
1 & 0 & 0 &0 & 1\\
0 & 1 & 0 & b_1 & 0
\end{array}\right )\, .\label{Cartier Manin p=2 node b_0 neq 0}
\end{align}
Let $e_i$ be the $i$-th row of $H$. 
Then    $e_4$ and $e_5$ are linearly independent and ${\rm rank}(H)\geq 2$.

Now we show ${\rm rank}(H)\geq 3$ in this case. Indeed, if ${\rm rank}(H)= 2$, then $e_i$  for $i=1,2,3$ is a linear combination of $e_4$ and $e_5$. By the shape of $H$, we have 
\begin{align*}
a_1=a_3=a_5=a_7=a_{10}=0, a_4=a_8,\, a_2=a_6,\,  b_1a_4=a_{11},\, b_1a_2=a_9\, .
\end{align*}
Hence $C$ is given by 
\begin{align*}
F&=xyz^3+(x^3+b_1y^3)z^2+(a_2x^3y+a_4xy^3)+a_2x^5+a_4x^3y^2+b_1a_2x^2y^3+b_1a_4y^5\\
&=(z+a_2^{1/2}x+a_4^{1/2}y)^2(b_1y^3+x^3+xyz)
\end{align*}
and $C$ is reducible.
This contradiction shows that ${\rm rank}(H)\geq 3$.

Now if $f$ is given by equation $(\ref{p=2 node f b_0=b_1=0})$, then again by Proposition \ref{prop 2.3.1} the Hasse-Witt matrix $H$ of $X$ is equal to
\begin{align}
\left ( \begin{array}{ccccc}
a_2 & 0 & a_1 & a_7 & a_6 \\ 
0 & a_4 & a_5 & a_{11} & a_{10} \\
a_4 & a_2 & a_3 & a_9 &a_8 \\
1 & 0 & 0 &0 & 0\\
0 & 1 & 0 & 0 & 0
\end{array}\right )\, .\label{Cartier Manin p=2 node b_0 =b_1=0}
\end{align}
Then we have ${\rm rank}(H)\geq 2$. Moreover, if ${\rm rank}(H)= 2$, then we have 
$a_i=0$ for all $i\in \{1,\dots,11 \}$ with $i\neq2,4$. This implies 
\begin{align*}
F=xyz^3+a_2x^3yz+a_4xy^3z=xy(z^3+a_2x^2z+a_4y^2)\, ,
\end{align*}
a contradiction. Hence we have  ${\rm rank}(H)\geq 3$ if $C$ has a node. 

If the curve $C$ has a cusp, then by Lemma \ref{prop 3.3.1}, $f$ is given by equation $(\ref{p=2 cusp})$. Hence by Proposition \ref{prop 2.3.1}, the Hasse-Witt matrix of $X$ is equal to  
\begin{align}
\left ( \begin{array}{ccccc}
a_2 & 0 & a_1 & a_7 & a_6 \\ 
0 & a_4 & a_5 & a_{11} & a_{10} \\
a_4 & a_2 & a_3 & a_9 &a_8 \\
0 & 0 & 1 &0 & 0\\
0 & 0 & 0 & 1 & 0
\end{array}\right )\, .\label{Cartier Manin p=2 cusp}
\end{align}
Then we still have ${\rm rank}(H)\geq 2$.  We show ${\rm rank}(H)\geq 3$ by showing that $C$ has at least two singular points if ${\rm rank}(H)= 2$. Indeed, suppose ${\rm rank}(H)= 2$. By $(\ref{Cartier Manin p=2 cusp})$, we obtain $a_2=a_4=a_6=a_8=a_{10}=0$. This implies 
\begin{align*}
F=x^2z^3+y^3z^2+(a_1x^4+a_3x^2y^2+a_5y^4)z+a_7x^4y+a_9x^2y^3+a_{11}y^5\, .
\end{align*}
Denote by $F_x$ (resp. $F_y ,\, F_z$) the formal partial derivative with respect to the variable $x$ (resp. $y, \, z$). Note that we have 
\begin{align*}
F_{x}=0, \, F_y=y^2z^2+a_7x^4+a_9x^2y^2+a_{11}y^4,\,
F_{z}=x^2z^2+a_1x^4+a_3x^2y^2+a_5y^4\, .
\end{align*}
By setting $x=1$ in $F_x, F_y$ and $F_z$, one can easily show that  $(1:a:b)$ is a singular point. Then there are at least two singular points on $C$. By the genus formula for plane curves, the genus of $X$ is less than $5$, a contradiction.

Now we have ${\rm rank}(H)\geq 3$ for any trigonal curve $X$ of genus $5$ over $k$. Then $a(X)\leq 2$. 
%Then by Chai and Oort \cite[Theorem 4.8, Step 2]{10.2307/23030376}, any trigonal curve of genus $5$ over $k$ has $a$-number at least 3. Hence there is no supersingular  trigonal curve of genus $5$ in characteristic $2$. 
\end{proof}

\subsection{The proof of Theorem \ref{thm p=3}}
Let $p=3$ and $X$ be a trigonal curve of genus $5$ defined over $k$. We now give the reductions of the defining equations of the singular model $C\subset \mathbb{P}^2$ of $X$ given by Lemma \ref{lem 2.2.1}.
\begin{lem}\label{prop 3.3.1 p=3}
Let $k$ be an algebraically closed field with ${{\rm char} (k)=3}$. In the notation of Lemma \ref{lem 2.2.1} case $(i)$, we can choose $f$ as  
\begin{align}
(b_0x^3+b_1y^3+b_2x^2y+b_3xy^2)z^2+\sum_{i=1}^5 a_ix^{5-i}y^{i-1}z
+a_6x^5+\sum_{i=8}^{11}a_ix^{11-i}y^{i-6}. \label{p=3 node  f}
\end{align}
For the case $(ii)$, we can choose $f$ as 
\begin{align}
(y^3+\sum_{i=2}^3 b_ix^{4-i}y^{i-1})z^2+\sum_{i=1}^5a_ix^{5-i}y^{i-1}z
+a_7x^4y+a_8x^3y^2+a_{10}xy^4+a_{11}y^5.\label{p=3 cusp}
\end{align}
\end{lem}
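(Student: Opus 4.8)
The plan is to follow the structure of the proof of Lemma~\ref{prop 3.3.1} in characteristic~$2$, normalizing the defining polynomial $F$ by a sequence of coordinate changes that fix the singular point $(0:0:1)$ and preserve the tangent cone $Q$: the shears $z\mapsto z+\alpha x+\beta y$, the substitution $y\mapsto y+\gamma x$ (in the cusp case), scalings of $x$ and $y$, and the swap $x\leftrightarrow y$ (in the node case). Starting from the general forms $F=xyz^3+f$ (node) and $F=x^2z^3+f$ (cusp) of Lemma~\ref{lem 2.2.1}, where $f$ ranges over the fifteen monomials of $z$-degree at most $2$, I would show that the prescribed coefficients can be annihilated, each time by solving a cubic over the algebraically closed field $k$.

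The decisive difference from characteristic~$2$ is the shape of cubes: in characteristic~$3$ Frobenius gives $(z+\alpha x+\beta y)^3=z^3+\alpha^3x^3+\beta^3y^3$, since $\binom{3}{1}=\binom{3}{2}=0$ in $k$. Writing $u=\alpha x+\beta y$ and letting $Q_2,Q_1$ denote the coefficients of $z^2$ and $z$ in $f$, the shear $z\mapsto z+u$ therefore leaves the $z^3$- and $z^2$-parts of $F$ unchanged, replaces the $z$-part by $2Q_2u+Q_1$, and adds to the $z$-degree-$0$ part the polynomial $xy\,u^3+Q_2u^2+Q_1u$ in the node case (respectively $x^2u^3+Q_2u^2+Q_1u$ in the cusp case). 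As $xy\,u^3=\alpha^3x^4y+\beta^3xy^4$ and $x^2u^3=\alpha^3x^5+\beta^3x^2y^3$, the shear now adjusts the degree-$5$-in-$x,y$ coefficients rather than the $z$-degree-$2$ coefficients as it would in characteristic~$2$; dually, for the cusp the term $y^3z^2$ contributes $\gamma^3x^3z^2$ under $y\mapsto y+\gamma x$, and nothing else of $z$-degree~$2$.

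For the node I would carry out the single reduction of removing the $x^4y$ coefficient. Taking $\beta=0$, the contribution $xy\,u^3=\alpha^3x^4y$ together with the lower powers coming from $Q_2u^2+Q_1u$ shows that the new coefficient of $x^4y$ is a monic cubic in $\alpha$, its leading coefficient $1$ originating from $xyz^3$; such a cubic has a root in $k$, and choosing $\alpha$ accordingly yields the form~\eqref{p=3 node  f}. Every other coefficient moved by the shear falls into a slot that the normal form retains, so it is merely relabelled, and as the $z^2$-part is untouched, all four monomials $x^3,y^3,x^2y,xy^2$ survive; this is why, unlike in characteristic~$2$, no normalization $b_0=1$ is imposed. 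For the cusp I would first rescale $y$ so that the coefficient of $y^3z^2$, nonzero by Lemma~\ref{lem 2.2.1}(ii), equals $1$, then apply $y\mapsto y+\gamma x$ to annihilate $x^3z^2$ through the now-monic cubic arising from $y^3z^2\mapsto y^3z^2+\gamma^3x^3z^2$. With the $z^2$-part thus fixed, I would apply $z\mapsto z+\alpha x$ to kill $x^5$ and afterwards $z\mapsto z+\beta y$ to kill $x^2y^3$, each again via a monic cubic. The order is forced: $z\mapsto z+\beta y$ contributes nothing to $x^5$, whereas $z\mapsto z+\alpha x$ feeds into $x^2y^3$ through $Q_2u^2$, so $\alpha$ must be fixed before $\beta$.

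The step I expect to demand the most care is the bookkeeping of the lower-order contributions $Q_2u^2+Q_1u$ (to the $z$-degree-$0$ part) and $2Q_2u$ (to the $z$-part) produced by each shear; these couple the various degree-$5$ coefficients and pin down exactly which monomials can be removed. In particular they account for why $xy^4$ must be retained in the node case: removing it by means of $\beta$ would re-excite $x^4y$ through the term $Q_1u$. Granting the monicity of each targeted cubic, so that a root always exists over $k$, the successive normalizations go through and deliver the two asserted forms.
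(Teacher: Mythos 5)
Your proposal is correct, and at its core it is the same argument as the paper's: linear coordinate changes fixing the singular point $(0:0:1)$, the characteristic-$3$ identity $(z+u)^3=z^3+u^3$, and roots of cubics over the algebraically closed field $k$. The differences lie in the execution, and on both counts your version is the more careful one. For the node, the paper applies the full shear $z\mapsto z+\alpha x+\beta y$ and asserts that the coefficients of both $x^4y$ and $xy^4$ can be killed simultaneously; this is a coupled system of two cubics in $(\alpha,\beta)$, which is indeed solvable (the leading forms $\alpha^3$ and $\beta^3$ have no common zero at infinity, so the two affine cubic curves meet by B\'ezout), but the paper gives no justification, and the output of its proof in fact disagrees with the stated normal form \eqref{p=3 node  f}, which retains $a_{10}xy^4$. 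Your choice $\beta=0$, killing only $x^4y$ by a single monic cubic in $\alpha$, proves exactly the statement and needs nothing beyond algebraic closedness; note, however, that your remark that $xy^4$ \emph{must} be retained is overstated---it can be removed as well, at the price of solving the coupled system. For the cusp, your proof supplies a step the paper's proof is missing: the claimed form \eqref{p=3 cusp} has no $x^3z^2$ term, yet the paper's proof only performs the shear (which in characteristic $3$ leaves the $z^2$-part untouched, as you observe) and the scaling $y\mapsto\delta y$ (which rescales but never kills a nonzero coefficient), so nothing in it accounts for the vanishing of $x^3z^2$. Your substitution $y\mapsto y+\gamma x$, with $\gamma$ a root of the monic cubic produced by $(y+\gamma x)^3$, is exactly the missing reduction, and your ordering---$\gamma$ before the shears, and $\alpha$ before $\beta$---is the correct one, since $y\mapsto y+\gamma x$ applied after the shears would re-excite $x^5$ and $x^2y^3$, while $z\mapsto z+\alpha x$ feeds into $x^2y^3$ through the term $Q_2u^2$.
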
 
\begin{proof}
If $C$ has a node, then by Lemma \ref{lem 2.2.1}, $F=xyz^3+f$ with $f$ the sum of monomials,  which have  degree $>2$ in $x,y$. By a linear transform $z\mapsto z+\alpha x+\beta y$ we may assume the coefficient of $x^4y$ and $xy^4$ is zero. Then $f$ is equal to
\begin{align*}
(b_0x^3+b_1y^3+b_2x^2y+b_3xy^2)z^2+\sum_{i=1}^5a_ix^{5-i}y^{i-1}z  +a_6x^5+a_8x^3y^2+a_9x^2y^3+a_{11}y^5\, . \
\end{align*}
By Lemma \ref{lem 2.2.1}, if $C$ has a cusp,  then $F=xyz^3+f$ with $f$ the sum of monomials,  which have  degree $>2$ in $x,y$ and the coefficient of $y^3z^2$ is non-zero. By a linear transform $z\mapsto z+\alpha x+\beta y$, we may assume the coefficient of $x^5$ and $x^2y^3$ is zero. Moreover, by scaling $y \mapsto \delta y$, we may assume the coefficient of $y^3z^2$ is $1$. Then we have
\begin{align*}
f=(y^3+b_2x^2y+b_3xy^2)z^2+\sum_{i=1}^5a_ix^{5-i}y^{i-1}z+a_7x^4y+a_8x^3y^2+a_{10}xy^4+a_{11}y^5\, .
\end{align*}
\end{proof}

\begin{proof}[Proof of Theorem $\ref{thm p=3}$]
Let $C$ be the singular model given by Lemma \ref{lem 2.2.1}. Denote by $H$ the Hasse-Witt matrix of $X$ given by Proposition \ref{prop 2.3.1} and by $e_i=(e_{i,1},\dots,e_{i,5})$ the $i$-th row of $H$. Then we have ${{\rm rank}(H)\geq 1}$ because of the Ekedahl's genus bound for superspecial curve \cite{Ekedahl1987}. Suppose ${{\rm rank}(H)= 1}$. We consider different cases for the singular point of $C$. 

If the curve $C$ has a node, by Lemma \ref{prop 3.3.1 p=3}, $f$ is given by equation $(\ref{p=3 node  f})$. If at least one of $b_0,b_1$ is non-zero, by symmetry we may assume $b_0\neq 0$. By scaling we may assume $b_0=1$. Moreover, by Proposition \ref{prop 2.3.1}, we have  $e_4=(2b_2,0,2,b_2^2+2b_3+2a_2,b_2+2a_1)$ which is non-zero. Then $e_i=\lambda_i e_4$ with $\lambda_i\in k$ for $i=1,2,3,5$. In particular, we have $$e_5=(0,2b_3,2b_1,2b_1b_3+2a_5,2b_1b_2 + b_3^2 + 2a_4)=\lambda_5 e_4\, .$$ This implies that  $b_3=0$ and $b_1b_2=0$. 

If $b_1=0$, then $e_5=(0,0,0,2a_5,2a_4)$ is the zero vector. Hence $a_4=a_5=0$. Note that in this case we have $e_{3,1}=2a_{11}$ and $e_{3,1}=\lambda_3 e_{4,1}=0$. Then $a_{11}=0$ and  
\begin{align*}
F&=xyz^3+(x^3+b_2x^2y)z^2+(a_1x^4+a_2x^3y+a_3x^2y^2)z+a_6x^5+a_8x^3y^2+a_9x^2y^3\, .
\end{align*}
%F_x&=z^3y+2b_2xyz^2+(a_1x^3+2a_3xy^2)z+2a_6x^4+2a_9xy^3\, ,\\
%F_y&=xz^3+b_2x^2z^2+(a_2x^3+2a_3x^2y)z+2a_8x^3y\, ,\\
%F_z&=2(x^3+b_2x^2y)z+a_1x^4+a_2x^3y+a_3x^2y^2
%with $F_x$ (resp. $F_y,F_z$) the formal derivative with respect to $x$ (resp. $y,z$). 
One can easily check that $(0:0:1)$ and $(0:1:0)$ are common zeros of $F=F_x=F_y=F_z=0$. Then $C$ has at least two singular points, a contradiction.

Now if $b_1\neq 0$, then the equation $b_1b_2=0$ implies $b_2=0$. Consider a change of coordinate $x\mapsto \alpha x , y\mapsto \beta y$ and multiply $F$ by $1/(\alpha \beta)$. The coefficients of $x^3$ and $y^3$ in $F$ become  $\alpha^2/\beta$ and $\beta^2b_1/\alpha$. By taking $\beta=\alpha^2$ and $\alpha=b_1^{-1/3}$, we may assume $b_0=b_1=1$. Then $e_5=\lambda_5 e_4$ implies $a_2=a_5$ and $a_1=a_4$.
Additionally, we have 
\begin{align*}
e_1&=(2a_1a_3+a_2^2+2a_8,a_1^2+2a_6,2a_1a_2,2a_1a_8+2a_3a_6,2a_2a_6),\\
e_2&=(a_2^2+2a_{11},a_1^2+2a_2a_3+2a_9,2a_1a_2,2a_1a_{11},2a_2a_9+2a_3a_{11})\, .
\end{align*}
Then by $e_1=\lambda_1e_4$ and $e_2=\lambda_2e_4$, we have 
\begin{align*}
a_6=a_1^2,a_8=a_2^2-a_1a_3,a_9=a_1^2-a_2a_3,a_{11}=a_2^2\, .
\end{align*} 
\iffalse
Similarly we have 
\begin{align*}
F_x&=yz^3+(a_1x^3+2a_3xy^2+a_1y^3)z+2a_1^2x^4+2(a_1^2-a_2a_3)xy^3\, ,\\
F_y&=xz^3+(a_2x^3+2a_3x^2y+a_2y^3)z+2a_2^2y^4+2(a_2^2-a_1a_3)x^3y\, ,\\
F_z&=2(x^3+y^3)z+a_1x^4+a_2x^3y+a_3x^2y^2+a_1xy^3+a_2y^4\, .
\end{align*}
\fi
If $a_3=0$, then one can easily  show that $(-1:1:0)$ and $(0:0:1)$ are  common zeros of  $F=F_x=F_y=F_z=0$ and hence are singular points of $C$, a contradiction.
If $a_3\neq 0$, then  $F_z=2(x^3+y^3)z+a_1x^4+a_2x^3y+a_3x^2y^2+a_1xy^3+a_2y^4$ and by substituting

$$
z=(a_1x^4+a_2x^3y+a_3x^2y^2+a_1xy^3+a_2y^4)/(x+y)^3
$$
 in $F_x$ and $F_y$ and by letting $y=1$, we have $(x(x+y)^9F_x)|_{y=1}=((x+y)^9F_y)|_{y=1}$ and  
\begin{align*}
((x+y)^9F_x)|_{y=1}&=(a_1^3 + a_1a_2)x^{12} + (a_1a_2 + a_2^3 + 2a_3^2)x^9 + (a_3^3 + a_3^2)x^6 \\
&+ (a_1^3 +
    a_1a_2 + 2a_3^2)x^3 + a_1a_2 + a_2^3\, .
\end{align*}
Since $a_3\neq 0$, one can check that it has solutions with $x\neq -1,0$.
%Note that if $x+y=0$ with $y=1$, then $F_z=a_3x^2y^2=a_3\neq 0$.
  Then there exists another singular point on $C$ which is distinct from $(0:0:1)$, a contradiction.

Now if $b_0=b_1=0$, we have $e_4=(2b_2,0,0,b_2^2+2a_2,2a_1)$ and $e_5=(0,2b_3,0,2a_5,b_3^2+2a_4)$.  Since ${\rm  rank}(H)=1$, we have at least one of $b_2,b_3$ is zero. By symmetry we may assume $b_3=0$. If $b_2\neq 0$, by scaling we may assume $b_2=1$. Note that we have  $e_i=\lambda_i e_4$ with $\lambda_i\in k$ for $i=1,2,3,5$. In particular for $i=5$, it is straightforward to see that $\lambda_5=0$ and $a_4=a_5=0$. Moreover, $e_{2,2}=2a_{11}=\lambda_2e_{4,2}=0$. Then by a similar fashion, one can show that $(0:0:1)$ and $(0:1:0)$ are singular points of $C$, a contradiction. If $b_3=b_2=0$, then 
\begin{align*}
e_1&=(2a_1a_3+a_2^2,a_1^2,2a_1a_2,2a_1a_8+2a_3a_6,2a_2a_6), \,
e_4=(0,0,0,2a_2,2a_1)\, .
\end{align*}
This implies $a_1=a_2=0$ otherwise $e_1$ and $e_4$ are linearly independent. Similarly one has $a_4=a_5=0$ by checking the linearly independence of $e_2$ and $e_5$. Now we have $e_4=e_5=0$ and 
 \begin{align*}
e_1&=(0,0,0,2a_3a_6,0),\,
e_2=(0,0,0,0,2a_3a_{11}), \, 
e_3=(0,0,a_3,2a_3a_9,2a_3a_8) \, .
\end{align*}
Since ${\rm rank}(H)=1$, we obtain that $a_6=a_{11}=0$ and $F=y(xz^3+a_3x^2yz+a_8x^3y+a_9x^2y^2)$, a contradiction.

Now if $C$ has a cusp, then by Lemma \ref{prop 3.3.1 p=3}, $f$ is given by equation $(\ref{p=3 cusp})$. Moreover, by Proposition \ref{prop 2.3.1}, we have 
\begin{align*}
e_4=(2b_3,0,2b_2,b_2^2+2a_3,2a_2),~
e_5=(0,2,0,2b_3,2b_2+b_3^2+2a_5)\, .
\end{align*}
If ${\rm rank}(H)=1$, then $e_i=\lambda_ie_5$ with $\lambda_i\in k$ for $i=1,2,3,4$. In particular,  $e_4=\lambda_4e_5$ implies
$
\lambda_4=b_2=b_3=a_2=a_3=0
$.
Then we obtain 
\begin{align*}
e_1&=(0,a_1^2,0,2a_1a_8,2a_1a_7),\,
e_2=(a_5^2+2a_{11},a_4^2,2a_4a_5+2a_{10},2a_4a_{11}+2a_5a_{10},2a_4a_{10}),\\
e_3&=(2a_{8},2a_1a_4,2a_1a_5+2a_7,2a_1a_{11}+2a_4a_8+2a_5a_7,2a_1a_{10}+2a_4a_7) \, .
\end{align*}
Hence by $e_i=\lambda_i e_5$ for $i=1,2,3$, we get 
$
a_7=2a_1a_5,a_8=0,a_{10}=2a_4a_5,a_{11}=a_5^2
$.
Additionally, one can easily check that  $F=F_x=F_y=F_z=0$
have common zeros $(0:0:1), (0:1:0)$ if $a_5=0$ and $(0:0:1), (0:1/(2a_5):1)$ if $a_5\neq 0$, a contradiction. Then ${\rm rank}(H)\geq 2$ if $C$ has a cusp.

In any case, we have ${\rm rank}(H)\geq 2$ and hence $a(X)\leq 3$.
\end{proof}

%\bibliography{/Users/zhouzijian/Dropbox/bibfile}
\bibliographystyle{plain}

\Addresses
\end{document}